\newtheorem{theorem}{Theorem}[section]
\newtheorem{lemma}[theorem]{Lemma}
\newtheorem{corollary}[theorem]{Corollary}
\theoremstyle{definition}
\newtheorem{definition}[theorem]{Definition}
\theoremstyle{remark}
\newtheorem{remark}[theorem]{Remark}
\numberwithin{equation}{section}
\begin{document}

\title[Isocapacity Estimates for Hessian Operators]
{Isocapacity Estimates for Hessian Operators}


\author[Jie Xiao]{Jie Xiao}
\address{Jie Xiao\\Department of Mathematics and Statistics, Memorial University, St. John's, NL A1C 5S7, Canada}
\email{jxiao@mun.ca}
\author[Ning Zhang]{Ning Zhang}
\address{Ning Zhang\\Department of Mathematical and Statistical Sciences, University of Alberta, Edmondon, T6G 2G1, Canada}
\email{nzhang2@ualberta.ca}

\thanks{Project supported by NSERC of Canada as well as by URP of Memorial University, Canada.}

\subjclass[2010]{Primary 35J60, 35J70, 35J96, 31C15, 31C45, 53A40}
\date{}

\keywords{Hessian operators; Isocapacitary inequalities; Hessian capacities}

\begin{abstract} Through a new powerful potential-theoretic analysis, this paper is devoted to discovering the geometrically equivalent isocapacity forms of Chou-Wang's Sobolev type inequality and Tian-Wang's Moser-Trudinger type inequality for the fully nonlinear $1\le k\le {n}/2$ Hessian operators.
\end{abstract}

\maketitle

\tableofcontents 


\section{Hessian Sobolev through isocapacitary inequalities}\label{s1} 

\subsection{Sobolev type inequalities for Hessian operators}\label{s11}

Unless a special remark is made, from now on, $\Omega$ is a bounded smooth domain in the $n$-dimensional Euclidean space $\mathbb R^n$ with $n\geq 2$. Let $u$ be a $C^2$ real-valued function on $\Omega$. For each integer $k\in[1,n]$, the $k$-Hessian operator $F_k$ is defined as
\begin{equation*}\label{e11}
F_k[u]=S_k(\lambda(D^2u))=\sum_{1\leq i_1< \dots < i_k\leq n}\lambda_{i_1}\cdots \lambda_{i_k},
\end{equation*} 
where $\lambda=(\lambda_1,\dots,\lambda_n)$ is the vector of the eigenvalues of the real symmetric Hessian matrix $[D^2u]$. In particular, one has:
$$
F_k[u]=\begin{cases}
\Delta u=\hbox{the\ Laplace\ operator}\ \hbox{ for } k=1;\\
\hbox{a\ fully\ nonlinear\ operator}\ \hbox{ for } 1<k<n;\\
\hbox{det}(D^2u)=\hbox{the\ Monge-Amp\'ere\ operator}\ \hbox{ for } k=n.
\end{cases}
$$ 
Here and henceforth, the following facts should be kept in mind: for $1<k<n$ each $F_k[u]$ is degenerate elliptic for any $k$-convex or $k$-admissible function $u$, denoted by $u\in\Phi^k(\Omega)$, namely, any $C^2(\Omega)$ function $u$ enjoying 
$$
F_j[u]\ge 0\ \ \hbox{on}\ \ \Omega\ \ \forall\ \ j=1,2,...,k. 
$$
Moreover, if $\Phi_0^k(\Omega)$ stands for the class of all functions $u\in\Phi^k(\Omega)$ with zero value on the boundary $\partial\Omega$ of $\Omega$, then $\Phi_0^k(\Omega)\not=\emptyset$ amounts to that $\partial\Omega$ is $(k-1)$-convex, i.e., the $j$-th mean curvature 
$$
H_{j}(\partial\Omega,x)=\frac{\sum_{1\le i_1<...<i_{j}\le n-1}\kappa_{i_1}(x)\cdots\kappa_{i_j}(x)}{\Big(\begin{array}{c} n-1\\ j\end{array}\Big)}\ \ \forall\ \ j=1,...,k-1
$$
of the boundary $\partial\Omega$ at $x$ is nonnegative, where $\kappa_1(x),...,\kappa_{n-1}(x)$ are the principal curvatures of $\partial\Omega$ at the point $x$; see for example \cite{CaNiSp, Fer, Ga, Lab,  TiW, TW2,  W2}.

As a natural generalization of the well-known case $k=1$, the following Sobolev type inequalities indicate that $\Phi_0^k$ can be embeded into some integrable function spaces; see Wang \cite{W1}, Chou \cite{Ch1, Ch2}, and Tian-Wang \cite{TiW} for the details.

\begin{theorem}\label{t11} Let 
$$
\begin{cases}
1\le k\le {n};\\
u\in\Phi_0^k(\Omega);\\
\|u\|_{\Phi_0^k(\Omega)}=\Big(\int_{\Omega} (-u)F_k[u] \Big)^{1/(k+1)}.
\end{cases}
$$
\begin{enumerate}[\rm(i)]
\item If $1\leq k<\frac{n}{2}$ and $1\le q\le k^*=\frac{n(k+1)}{n-2k}$, then there is a positive constant $c(n,k,q,|\Omega|)$ depending only on $n$, $k$, $q$, and the volume $|\Omega|$ of $\Omega$ such that the Sobolev type inequality
	\begin{equation*}\label{e14}
	\|u\|_{L^{q}(\Omega)}\leq c(n,k,q,|\Omega|)\|u\|_{\Phi_0^k(\Omega)}
	\end{equation*}
holds, where for $q=k^*$ the best constant in the last estimate is obtained via letting $u: \Omega\to\mathbb{R}^n$ be
	\begin{equation*}\label{e15}
	u(x)=\big(1+|x|^2 \big)^{\frac{2k-n}{2k}}.
	\end{equation*}
Moreover, for $k=\frac{n}{2}$ and $0<q<\infty$, there is a positive constant $c(n,k,q,\hbox{diam}(\Omega))$ depending only on
$n,k,q$ and the diameter $\hbox{diam}(\Omega)$ of $\Omega$ such that the Sobolev type inequality
 \begin{equation*}\label{e16}
	\|u\|_{L^q(\Omega)}\leq c(n,k,q,\textnormal{diam}(\Omega))\|u\|_{\Phi_0^k(\Omega)}
	\end{equation*}
	holds.
	
\item If $k=\frac{n}{2}$, then there is a positive constant $c(n,\hbox{diam}(\Omega))$ depending only on $n$ and $\hbox{diam}(\Omega)$ such that the Moser-Trudinger type inequality
	\begin{equation*}
	\label{e16e}
\sup_{0<\|u\|_{\Phi_0^k(\Omega)}<\infty}\int_\Omega\exp\left(\alpha\Big(\frac{|u|}{\|u\|_{\Phi_0^k(\Omega)}}\Big)^\beta\right)\le c(n,\hbox{diam}(\Omega))
	\end{equation*}
	holds, where 
	$$
	\begin{cases}
	0<\alpha\le\alpha_0=n\left(\frac{\omega_n}{k}\tiny{\big(\begin{array}{c} n-1\\ k-1\end{array}\big)}\right)^\frac2n;\\
	1\le\beta\le\beta_0=1+\frac2n;\\
	\omega_n=\hbox{the\ surface\ area\ of\ the\ unit\ sphere\ in}\ \mathbb R^{n}.
	\end{cases}
	$$
	
\item If $\frac{n}{2}< k\leq n$, then there is a positive constant $c(n,k,\textnormal{diam}(\Omega))$ depending only on $n,k$ and $\hbox{diam}(\Omega)$ such that the Morrey-Sobolev type inequality
	\begin{equation*}\label{e17}
	\|u\|_{L^{\infty}(\Omega)}\leq c(n,k,\textnormal{diam}(\Omega))\|u\|_{\Phi_0^k(\Omega)}
	\end{equation*}
	holds.

\end{enumerate}
\end{theorem}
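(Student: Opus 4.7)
My plan is to reduce all three parts of Theorem \ref{t11} to one-dimensional weighted inequalities via a Hessian symmetrization, and then to close each case by a classical Hardy, Moser, or H\"older argument.

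First I would establish a Tso-Talenti style rearrangement. Given $u\in\Phi_0^k(\Omega)$ with $u\le 0$, let $\Omega^*$ be the Euclidean ball centred at the origin with $|\Omega^*|=|\Omega|$, and let $u^*\in\Phi_0^k(\Omega^*)$ be the radially symmetric decreasing function whose sub-level sets $\{u^*<-t\}$ are balls of the same Lebesgue measure as $\{u<-t\}$. Then $\|u^*\|_{L^q(\Omega^*)}=\|u\|_{L^q(\Omega)}$, and I would aim for the energy decrease
\[
\int_{\Omega^*}(-u^*)F_k[u^*]\,dx\le\int_\Omega(-u)F_k[u]\,dx.
\]
This reduces every statement to radial $u(x)=\phi(|x|)$ with $\phi$ smooth and increasing on $[0,R]$ and $\phi(R)=0$.

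For such radial $u$, the eigenvalues of $D^2u$ are $\phi''$ (simple) and $\phi'/r$ (multiplicity $n-1$), whence
\[
F_k[u]=\tfrac1k\tbinom{n-1}{k-1}r^{1-n}\tfrac{d}{dr}\bigl[r^{n-k}(\phi')^k\bigr],
\]
and an integration by parts yields
\[
\|u\|_{\Phi_0^k(\Omega)}^{k+1}=\tfrac{\omega_n}{k}\tbinom{n-1}{k-1}\int_0^R r^{n-k}(\phi'(r))^{k+1}\,dr.
\]
Setting $\psi=\phi'\ge 0$ so that $\phi(r)=-\int_r^R\psi$, the target inequality in part (i) becomes the one-dimensional weighted Hardy-Sobolev inequality
\[
\biggl(\int_0^R\Bigl(\int_r^R\psi\Bigr)^{\!q} r^{n-1}\,dr\biggr)^{\!1/q}\le C\biggl(\int_0^R\psi^{k+1}r^{n-k}\,dr\biggr)^{\!1/(k+1)},
\]
valid for $q\le k^*=n(k+1)/(n-2k)$; its Euler-Lagrange equation at $q=k^*$ has $\phi(r)=(1+r^2)^{(2k-n)/(2k)}$ as its decaying solution, yielding the announced extremiser. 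Part (ii) is Moser's exponential level-set argument applied to the same one-dimensional functional at $k=n/2$, with the constants $\alpha_0,\beta_0$ emerging by carefully tracking the prefactor $\tfrac{\omega_n}{k}\binom{n-1}{k-1}$ through the rearrangement $t\mapsto -u^*(r)$. Part (iii) reduces to a H\"older estimate on $\phi(0)=-\int_0^R\phi'(r)\,dr$ against the dual weight $r^{-(n-k)/k}$, which is integrable precisely when $k>n/2$.

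The main obstacle is the symmetrization step itself. Unlike for the Laplacian, $F_k$ is genuinely nonlinear and not of divergence form in the standard sense, so the P\'olya-Szeg\H o inequality is not directly available. The required energy decrease must instead be obtained by writing $\int(-u)F_k[u]$ as a layer-cake integral of the Hessian capacity $\mathrm{cap}_k(\{u<-t\},\Omega)$ against $t$, and then invoking a capacitary isoperimetric comparison $\mathrm{cap}_k(E,\Omega)\ge\mathrm{cap}_k(E^*,\Omega^*)$ for the sub-level sets $E$. Establishing this capacitary comparison, together with the equivalence between the Hessian Sobolev norm $\|u\|_{\Phi_0^k(\Omega)}$ and its isocapacitary surrogate, is the potential-theoretic heart of the matter and is precisely the programme announced in the abstract of the present paper.
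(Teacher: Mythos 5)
Your one-dimensional reductions (the radial formula for $F_k$, the weighted Hardy--Bliss inequality for $q\le k^*$, Moser's level-set argument at $k=n/2$, and the H\"older estimate giving the $L^\infty$ bound for $k>n/2$) are fine, but the step you yourself identify as the heart of the argument -- the symmetrization -- is a genuine gap, and the fallback you propose for it does not work. First, the asserted energy decrease $\int_{\Omega^*}(-u^*)F_k[u^*]\le\int_\Omega(-u)F_k[u]$ is a P\'olya--Szeg\H o principle for Hessian integrals which is not available for $1<k$; worse, the Schwarz rearrangement $u^*$ of a $k$-admissible function need not be $k$-admissible (radial admissibility requires $r\mapsto r^{n-j}(\phi')^j$ to be nondecreasing for all $j\le k$, which the rearrangement does not preserve), nor $C^2$, so the left-hand side need not even be defined within $\Phi_0^k(\Omega^*)$. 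The symmetrization results that do exist in the literature (Chou/Tso \cite{Ch1}, and the radial reduction in Tian--Wang \cite{TiW}) compare the \emph{solution} of the Dirichlet problem $F_k[u]=f$, $u|_{\partial\Omega}=0$ with the radial solution of a symmetrized problem; they are comparison principles for equations, not rearrangement inequalities for arbitrary admissible functions, and the reduction to radial competitors in \cite{TiW} is itself a substantial theorem proved by those means. Second, your proposed capacitary substitute fails mechanically: the layer-cake decomposition of the energy gives $\int_\Omega(-u)F_k[u]=\int_0^\infty\big(\int_{\{|u|\ge t\}}F_k[u]\big)\,dt$, i.e.\ the Hessian \emph{measure} of the level sets, not their capacity; the capacitary weak-type bound $cap_k(\{|u|\ge t\},\Omega)\le t^{-(k+1)}\|u\|_{\Phi_0^k(\Omega)}^{k+1}$ (Theorem \ref{t41}(i)) runs in the wrong direction to reconstruct the energy from capacities, and the comparison $cap_k(E,\Omega)\ge cap_k(E^*,\Omega^*)$ you invoke is nowhere established.

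There is also a structural circularity: you appeal to ``the programme announced in the abstract of the present paper'' to supply the missing capacitary input, but the paper's logic goes the other way. Theorem \ref{t11} is not proved here at all -- it is imported from Wang \cite{W1}, Chou \cite{Ch1,Ch2} and Tian--Wang \cite{TiW} -- and the isocapacitary inequalities of Theorem \ref{t12}, as well as the capacitary strong-type estimate behind Theorem \ref{t13}, are all \emph{deduced from} Theorem \ref{t11} (see \S\ref{s3}), with a converse only for Lebesgue measure on balls and restricted exponents. So the capacitary machinery you hope to lean on presupposes exactly the inequalities you are trying to prove. To make your outline into a proof you would need either an independent proof of a Hessian P\'olya--Szeg\H o/isocapacitary comparison (not known in this generality) or to follow the equation-based symmetrization route of \cite{Ch1} and \cite{TiW}, in which case the argument is no longer the one you describe.
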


\subsection{Statement of Theorems \ref{t12}-\ref{t13}}\label{s12} Since the Morrey-Sobolev type inequality in Theorem \ref{t11} (iii) is relatively independent (cf. \cite{Tal}), a natural question comes up: {\it what is the geometrically equivalent form of Theorem \ref{t11} (i)-(ii)?} To answer this question, we need the so-called $k$-Hessian capacity that was introduced by Trudinger-Wang \cite{TW3} in a way similar to the capacity defined by Bedford-Taylor in \cite{BT} for the purisubharmonic functions. To be more precise, if $K$ is a compact subset of $\Omega$, then the $[1,n]\ni k$ Hessian capacity of $K$ with respect to $\Omega$ is determined by
\begin{equation*}\label{e18}
cap_k(K,\Omega)=\sup\left\{\int_K F_k[u]:\ u\in \Phi^k(\Omega),\ -1<u<0\right\};
\end{equation*}
and hence for an open set $O\subset \Omega$ we define
\begin{equation*}\label{e19}
cap_k(O,\Omega)=\sup\Big\{cap_k(K,\Omega):\ \textnormal{compact}\ K\subset O\Big\};
\end{equation*}
whence giving the definition of $cap_k(E,\Omega)$ for an arbitrary set $E\subset \Omega$: 
\begin{equation*}\label{e110}
cap_k(E,\Omega)=\inf\Big\{cap_k(O,\Omega):\ \textnormal{open}\ O\ \hbox{with}\ E\subset O\subset\Omega\Big\}.
\end{equation*}
Remarkably, Phuc-Verbitsky's \cite[Theorem 2.20]{PV} reveals that if $1\le k<n/2$ and $K$ is a compact subset of $\overline{Q}$ with $Q$ being any element of a Whitney decomposition of $\Omega$ into a union of disjoint dyadic cubes then $cap_k(K,\Omega)$ is equivalent to the so-called Bessel capacity $Cap_{{\mathbf G}_{{2k}/{(k+1)}},k+1}(K,\Omega)$.  

According to Labutin's computation in \cite[(4.16)-(4.17)]{Lab}, we see that if $B_\rho\subset\mathbb{R}^n$ denotes the open ball centered at the origin with radius $\rho>0$ and if $0<r<R<\infty$ then there is a constant $c(n,k)>0$ depending only on $n,k$ such that
$$
cap_k(B_r,B_R)=
\begin{cases}
c(n,k)\Big(r^{2-\frac{n}{k}}-R^{2-\frac{n}{k}}\Big)^{-k}\textnormal{ for } 1\leq k< \frac{n}{2};\\
c(n,k)\Big(\log\frac{R}{r}\Big)^{\frac{n}{2}}\textnormal{ for }  k=\frac{n}{2}.
\end{cases}
$$
Moreover, $cap_k(\cdot,\Omega)$ has the following metric properties (cf. \cite[Lemma 4.1]{Lab}):

\begin{itemize}
\item if $E=\emptyset$, then $cap_k(E,\Omega)=0$;

\item if $E_1\subset E_2\subset\Omega$, then $cap_k(E_1,\Omega)\le cap_k(E_2,\Omega)$;

\item if $E\subset\Omega_1\subset\Omega_2$, then $cap_k(E,\Omega_1)\ge cap_k(E,\Omega_2)$;

\item if $E_1,E_2,\cdots\subset\Omega$, then $cap_k(\cup_j E_j,\Omega)\le\sum_{j}cap_k(E_j,\Omega)$;

\item if $K_1\supset K_2\supset\cdots$ is a sequence of compact subsets of $\Omega=B_R$, then $cap_k(\cap_{j} K_j,\Omega)=\lim_{j\to\infty}cap_k(K_j,\Omega)$.
\end{itemize}

As a geometric form of Theorem \ref{t11} (i)-(ii), we have the following isocapacitary inequalities for the $k$-Hessian operators -- see also Maz'ya \cite[(8.8)-(8.9)]{Maz} for the case $k=1$.
 
\begin{theorem}\label{t12} Let $E\subset \Omega$ and $1\le k\le\frac{n}{2}$. 
\begin{enumerate}[\rm(i)]

\item If $1\leq k<\frac{n}{2}$ and $1\leq q\leq \frac{n(k+1)}{n-2k}$, then there exists a constant $c(n,k,q,|\Omega|)>0$ depending only on $n,k,q$, and $|\Omega|$ such that
\begin{equation*}\label{e111}
|E|^\frac{k+1}{q}\leq c(n,k,q,|\Omega|)cap_k(E, \Omega).
\end{equation*}
In particular, for $q=\frac{n(k+1)}{n-2k}$ there exists a constant $c(n,k)>0$ depending only on $n,k$ such that
\begin{equation*}\label{e1111}
|E|^\frac{n-2k}{n}\leq c(n,k)cap_k(E, \Omega).
\end{equation*}
Moreover, for $k=\frac{n}{2}$ and $1\leq q<\infty$ there is a positive constant $c(n,k,q,\textnormal{diam}(\Omega))$ depending only on $n,k,q$ and $\textnormal{diam}(\Omega)$ such that
\begin{equation*}\label{e113}
|E|^\frac{k+1}{q}\leq c(n,k,q,\textnormal{diam}(\Omega))cap_k(E,\Omega).
\end{equation*}
\item If $k=\frac{n}{2}$, then there is a constant $c(n)>0$ depending only on $n$ such that
\begin{equation*}\label{e112}
\frac{|E|}{|\Omega|}\le c(n)\exp\left(-\frac{\alpha}{\big(cap_k(E,\Omega)\big)^{\frac{\beta}{k+1}}}\right)
\end{equation*}
holds for a constant $c(n,k)$ depending only on $n,k$, where 
    $$
	\begin{cases}
	0<\alpha\le\alpha_0=n\left(\frac{\omega_n}{k}\tiny{\big(\begin{array}{c} n-1\\ k-1\end{array}\big)}\right)^\frac2n;\\
	1\le\beta\le\beta_0=1+\frac2n;\\
	\omega_n=\hbox{the\ surface\ area\ of\ the\ unit\ sphere\ in}\ \mathbb R^{n}.
	\end{cases}
	$$
\end{enumerate}
\end{theorem}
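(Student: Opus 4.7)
The plan is to derive both isocapacitary inequalities by testing the functional inequalities of Theorem \ref{t11} against the capacitary potential associated with $E$. First, using the monotonicity, countable subadditivity, and sequential continuity listed among the metric properties of $cap_k(\cdot,\Omega)$, together with the regularity of Lebesgue measure, it suffices to establish the claimed bounds for compact $K\subset\Omega$; approximating a general $E$ from outside by open sets $O\supset E$ and from inside by compact $K\subset O$ then delivers the general statement.

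For a compact $K\subset\Omega$ the key step is to introduce the capacitary potential $U=U_{K,\Omega}$, constructed as the limit of a maximizing sequence of admissible competitors in the $\sup$-definition of $cap_k(K,\Omega)$. By the Trudinger-Wang \cite{TW3} weak convergence theory for $k$-Hessian measures, this $U$ lies in $\Phi_0^k(\Omega)$ with $U=-1$ quasi-everywhere on $K$, $U=0$ on $\partial\Omega$, and $F_k[U]$ concentrated on $\overline K$. Integration by parts then yields the crucial calibration
\begin{equation*}
\|U\|_{\Phi_0^k(\Omega)}^{k+1}=\int_\Omega(-U)F_k[U]=\int_K F_k[U]=cap_k(K,\Omega).
\end{equation*}

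To establish part (i), I would feed $U$ into the Sobolev inequality of Theorem \ref{t11}(i). Since $|U|\geq 1$ on $K$,
\begin{equation*}
|K|^{1/q}\leq\Big(\int_K|U|^q\Big)^{1/q}\leq\|U\|_{L^q(\Omega)}\leq c(n,k,q,|\Omega|)\,cap_k(K,\Omega)^{\frac{1}{k+1}},
\end{equation*}
and raising to the $(k+1)$-th power delivers $|K|^{(k+1)/q}\leq c(n,k,q,|\Omega|)^{k+1}\,cap_k(K,\Omega)$. For the sharp exponent $q=\frac{n(k+1)}{n-2k}$, scale invariance of the Sobolev inequality at $k^*$ eliminates the $|\Omega|$ dependence and leaves a constant depending only on $n,k$; the $k=\frac{n}{2}$ subcase is handled identically. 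For part (ii) the same test function $U$ is inserted into the Moser-Trudinger inequality of Theorem \ref{t11}(ii): the integrand $\exp\!\bigl(\alpha(|U|/\|U\|_{\Phi_0^k(\Omega)})^\beta\bigr)$ is constant equal to $\exp\!\bigl(\alpha/cap_k(K,\Omega)^{\beta/(k+1)}\bigr)$ on $K$, hence
\begin{equation*}
|K|\exp\!\left(\frac{\alpha}{cap_k(K,\Omega)^{\beta/(k+1)}}\right)\leq c(n,\textnormal{diam}(\Omega)),
\end{equation*}
and exploiting the scale invariance of the $\Phi_0^{n/2}$ norm (so that $c(n,\textnormal{diam}(\Omega))\lesssim c(n)|\Omega|$) yields the desired exponential decay of $|K|/|\Omega|$.

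The main obstacle will be rigorously justifying the calibration $\|U\|_{\Phi_0^k(\Omega)}^{k+1}=cap_k(K,\Omega)$ for the extremal $U$: because the admissible class in the definition of $cap_k$ uses the strict constraint $-1<u<0$, one must pass to the limit along a maximizing sequence $\{u_j\}$ and verify both the weak convergence $F_k[u_j]\rightharpoonup F_k[U]$ and preservation of the zero boundary trace, which requires the distributional $k$-Hessian framework of Trudinger-Wang. A secondary technical point is replacing $c(n,\textnormal{diam}(\Omega))$ by $c(n)|\Omega|$ in part (ii), which follows from the dilation covariance of the exponential integral once the base inequality is in place. Once these two points are settled the reduction to Theorem \ref{t11} is essentially automatic.
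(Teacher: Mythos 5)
Your core mechanism -- testing Theorem \ref{t11} with the extremal potential $U$ of $K$ and using the calibration $\int_\Omega(-U)F_k[U]=\int_K F_k[U]=cap_k(K,\Omega)$ -- is exactly the engine of the paper's proof: that calibration is precisely the content of Lemma \ref{l21} and Theorem \ref{t21} (the identification $cap_k=cap_{k,3}=cap_{k,4}$ via the regularised relative extremal function $R_k(K,\cdot)$), so you would in effect be re-proving \S\ref{s2} through the Labutin/Trudinger--Wang machinery you cite. The genuine gap is \emph{where} you run this argument. You construct $U$ and apply Theorem \ref{t11} directly in $\Omega$, but $\Omega$ is only assumed bounded and smooth: no $(k-1)$-convexity of $\partial\Omega$ is imposed, so $\Phi_0^k(\Omega)$ may be empty, the Dirichlet problem for $F_k[u]=0$ in $\Omega\setminus K$ with the required boundary values need not be solvable, and the extremal-function facts you invoke are established (and used in the paper) only for balls. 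Meanwhile $cap_k(E,\Omega)$ is defined through $\Phi^k(\Omega)$ and the theorem remains a nontrivial statement in that situation, so your argument would simply produce nothing there. The missing device is the paper's enclosing-ball reduction: prove the inequality for compact $E$ relative to an origin-centered ball $B_r\supset\Omega$ with $r=\mathrm{diam}(\Omega)$, where Lemma \ref{l21}/Theorem \ref{t21} apply, and then transfer it to $\Omega$ by the domain monotonicity $cap_k(E,B_r)\le cap_k(E,\Omega)$ for $\Omega\subset B_r$ (Labutin, Lemma 4.1(ii)). This step is also what produces the $|\Omega|$- or $\mathrm{diam}(\Omega)$-dependence of the constants; at $q=k^*$ the paper removes it by letting $r\to\infty$ (using Dai--Bao's exterior solutions), and your alternative appeal to the dimensional character of the critical Sobolev constant is an acceptable substitute for that step.

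Two further remarks on part (ii). Your dilation argument -- using that $\|\cdot\|_{\Phi_0^{n/2}}$ is scale invariant while the exponential integral scales like $\lambda^n$ -- is a genuinely slicker route to the improved Moser--Trudinger bound $c(n)\,\mathrm{diam}(\Omega)^n$ than the paper's Step $(ii)_1$, which redoes the Tian--Wang radial reduction to dimension $\frac n2+1$; since the constant in Theorem \ref{t11}(ii) depends only on $n$ and the diameter, normalizing to diameter one and rescaling indeed gives $c(n)\,\mathrm{diam}(\Omega)^n$. However, what scaling gives is $\mathrm{diam}(\Omega)^n$, not $|\Omega|$: your parenthetical claim $c(n,\mathrm{diam}(\Omega))\lesssim c(n)|\Omega|$ is false for general domains (long thin $\Omega$), so the passage from $|K|\le c(n)\,\mathrm{diam}(\Omega)^n\exp(-\alpha\,cap_k(K,\Omega)^{-\beta/(k+1)})$ to the stated bound on $|K|/|\Omega|$ needs $\mathrm{diam}(\Omega)^n\lesssim|\Omega|$, i.e.\ essentially that $\Omega$ is comparable to a ball; the paper's own final ``simple calculation'' makes the same silent leap, but you should not present it as a consequence of scale invariance.
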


Theorem \ref{t12} (i)-(ii) will be verified in \S \ref{s3} by using Theorem \ref{t11} (i)-(ii) and a new characterization of $cap_k(\cdot,\Omega)$ given in \S \ref{s2}. This process indicates Theorem \ref{t11} (i)-(ii) $\Rightarrow$ Theorem \ref{t12} (i)-(ii). Nevertheless, Theorem \ref{t13} (i)-(ii) below with $\mu$ being the $n$-dimensional Lebesgue measure shows Theorem \ref{t12} (i)-(ii) $\Rightarrow$ Theorem \ref{t11} (i)-(ii) under $\Omega$ being an origin-centered ball and $k+1\leq q\leq \frac{n(k+1)}{n-2k}$.

\begin{theorem}\label{t13} Given an origin-centered Euclidean ball $\Omega\subset\mathbb R^n$, $1\le k\le \frac{n}{2}$, and a nonnegative Randon measure $\mu$ on $\Omega$, let
$$
\tau(\mu,\Omega,t)=\inf\big\{cap_k(K,\Omega):\ \hbox{compact}\ K\subset\Omega\ \hbox{with}\ \mu(K)\ge t\big\}\ \forall\ t>0.
$$
be the $k$-Hessian capacitary minimizing function with respect to $\mu$.

\begin{enumerate}[\rm(i)]
\item If $1\le k\le \frac{n}{2}$, then
\begin{equation*}
\label{e1e}
\sup\left\{\frac{\|u\|_{L^q(\Omega,\mu)}}{\|u\|_{\Phi^k_0(\Omega)}}:\  u\in \Phi_0^k(\Omega)\cap C^2(\bar{\Omega}),\ \ 0<\|u\|_{\Phi^k_0(\Omega)}<\infty\right\}<\infty
\end{equation*}
holds if and only if
$$
\begin{cases}
\label{e1e1}
\sup_{t>0} \frac{t^\frac{k+1}{q}}{\tau(\mu,\Omega,t)}<\infty\ \hbox{ for }\ k+1\leq q<\infty;\\
\label{e2e2}
\int_0^\infty \Big(\frac{t^\frac{k+1}{q}}{\tau(\mu,\Omega,t)}\Big)^\frac{q}{k+1-q}\,\frac{dt}{t}<\infty\ \hbox{ for }\ 1<q<k+1.
\end{cases}
$$
\item If $k=\frac{n}{2}$, then
\begin{equation*}
\label{e1e3}
\sup\Big\{\|u\|_{L^1_\varphi(\Omega,\mu)}: \  u\in \Phi_0^k(\Omega)\cap C^2(\bar{\Omega}),\ \ 0<\|u\|_{\Phi^k_0(\Omega)}<\infty\Big\}<\infty
\end{equation*}
holds if and only if
$$
\label{e1e4}
\sup_{t>0} t\exp\left(\frac{\alpha}{\big(\tau(\mu,\Omega,t)\big)^{\frac{\beta}{k+1}}}\right)<\infty,
$$
where 
$$
\begin{cases}
\|u\|_{L^1_\varphi(\Omega,\mu)}=\int_\Omega\varphi(u)\,d\mu;\\
\varphi(u)=\exp\left(\alpha\Big(\frac{|u|}{\|u\|_{\Phi_0^k(\Omega)}}\Big)^\beta\right);\\
0<\alpha<\alpha_0=n\left(\frac{\omega_n}{k}\tiny{\big(\begin{array}{c} n-1\\ k-1\end{array}\big)}\right)^\frac2n;\\
	1\le\beta\le\beta_0=1+\frac2n;\\
	\omega_n=\hbox{the\ surface\ area\ of\ the\ unit\ sphere\ in}\ \mathbb R^{n}.
\end{cases}
$$
\end{enumerate}
\end{theorem}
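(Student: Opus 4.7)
The plan is to adapt Maz'ya's capacitary method (cf.~\cite{Maz}) from the classical Sobolev setting to the $k$-Hessian framework. The argument will rest on three inputs: the Sobolev and Moser-Trudinger embeddings of Theorem~\ref{t11}; the new identity to be established in \S\ref{s2}, which expresses $\|u\|_{\Phi_0^k(\Omega)}^{k+1}$ in terms of weighted $k$-Hessian capacities of the sublevel sets $\{u\le -t\}$ and in particular yields a capacitary strong-type inequality of the form $\int_0^\infty cap_k(\{u\le -t\},\Omega)\,d(t^{k+1}) \le C\,\|u\|_{\Phi_0^k(\Omega)}^{k+1}$; and, for every compact $K\subset\Omega$, the existence of a smooth capacitary extremal $u_K\in\Phi_0^k(\Omega)\cap C^2(\bar\Omega)$ with $u_K\le -1$ on $K$ and $\|u_K\|_{\Phi_0^k(\Omega)}^{k+1} = cap_k(K,\Omega)+o(1)$, obtained by solving the $k$-Hessian obstacle problem and then regularizing.

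For the necessity part of (i), testing the Sobolev quotient on $u_K$ for a compact $K$ with $\mu(K)\ge t$ yields
\[
\frac{\|u_K\|_{L^q(\Omega,\mu)}}{\|u_K\|_{\Phi_0^k(\Omega)}} \ge \frac{t^{1/q}}{\bigl(cap_k(K,\Omega)\bigr)^{1/(k+1)}+o(1)},
\]
since $|u_K|\ge 1$ on $K$; taking the infimum over admissible $K$ then forces $\sup_{t>0}t^{(k+1)/q}/\tau(\mu,\Omega,t)<\infty$, which is the required condition when $q\ge k+1$. For $1<q<k+1$ I would test instead on a superposition $u=\sum_j c_j u_{K_j}$ of capacitary extremals associated to a dyadic chain of sublevel sets, with coefficients $c_j$ chosen to saturate Hölder duality between $L^q$ and $L^{q/(q-1)}$; convergence of the resulting series forces the stronger integral condition.

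For sufficiency in (i), set $a_j = cap_k(\{u\le -2^j\},\Omega)$ for $u\in\Phi_0^k(\Omega)\cap C^2(\bar\Omega)$. The hypothesis applied with $t=\mu(\{u\le -2^j\})$ gives $\mu(\{u\le -2^j\})\le C a_j^{q/(k+1)}$ in the super-critical case, and a layer-cake computation yields $\int_\Omega|u|^q\,d\mu \le C\sum_j 2^{jq} a_j^{q/(k+1)}$. When $q\ge k+1$, setting $b_j = 2^{j(k+1)}a_j$ and invoking the elementary inequality $\sum_j b_j^r \le \bigl(\sum_j b_j\bigr)^r$ for $r=q/(k+1)\ge 1$ reduces this to $\bigl(\sum_j b_j\bigr)^{q/(k+1)}\le C\|u\|_{\Phi_0^k(\Omega)}^q$ via the capacitary strong-type inequality. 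When $1<q<k+1$ the elementary inequality fails; instead, Hölder with conjugate exponents $(k+1)/q$ and $(k+1)/(k+1-q)$ against the weight $\tau(\mu,\Omega,\cdot)^{-q/(k+1-q)}$ produces the required bound, the integrability of the weight being exactly the assumed integral condition. Part (ii) proceeds along the same lines: necessity by testing on $u_K$ to obtain $\|u_K\|_{L^1_\varphi(\Omega,\mu)}\ge\mu(K)\exp\bigl(\alpha\,cap_k(K,\Omega)^{-\beta/(k+1)}+o(1)\bigr)$, and sufficiency by Taylor-expanding $\varphi$ and summing the corresponding $L^q$ estimates from (i), where the strict inequality $\alpha<\alpha_0$ combined with $\beta\le\beta_0$ guarantees convergence of the exponential series.

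The main obstacle is the sub-critical sufficiency in (i). In the classical Riesz-capacity setting, truncations of Sobolev functions remain in the Sobolev class with controlled norm, but truncations of $k$-convex functions are generally no longer $k$-convex, so we cannot mimic Maz'ya's truncation step verbatim. Instead, we must close the Hölder duality estimate using only the capacitary strong-type inequality of \S\ref{s2}, carefully aligning the dyadic level-set decomposition of $u$ with the minimizing function $\tau(\mu,\Omega,\cdot)$ and verifying that the resulting weighted sum is genuinely controlled by the prescribed integral. This is the point at which the potential-theoretic machinery of \S\ref{s2} is indispensable, and its successful deployment is the crux of the argument.
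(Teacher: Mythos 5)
Your overall route -- the capacitary strong-type inequality plus near-extremal functions for $cap_k$, layer-cake decomposition, and H\"older duality in the subcritical range -- is the same machinery the paper uses (its Theorem \ref{t41} and Theorem \ref{t21}), and your treatment of part (i) in the supercritical range $q\ge k+1$ (both directions) and of the subcritical sufficiency is essentially the paper's argument. But two steps as you describe them would fail. First, the subcritical necessity: you propose testing on a superposition $u=\sum_j c_j u_{K_j}$ of capacitary extremals. For the $k$-Hessian energy, which is a degree-$(k+1)$ form in $u$, there is no estimate of the type $\|\sum_j c_j u_{K_j}\|_{\Phi_0^k(\Omega)}^{k+1}\lesssim \sum_j c_j^{k+1}cap_k(K_j,\Omega)$ (even for $k=1$ plain sums only give the triangle inequality, which is too weak to produce the integral condition), and moreover a sum does not give clean lower bounds on $\mu(M_t(u))$. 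The paper instead takes the \emph{maximum} $u_{i,m}=\sup_{i\le j\le m}\gamma_j u_j$ with $\gamma_j=\big(2^j/\tau(\mu,\Omega,2^j)\big)^{1/(k+1-q)}$, which stays in $\Phi_0^k(\Omega)\cap C^2(\bar\Omega)$, satisfies $u_{i,m}\le-\gamma_j$ on $K_j$ (hence $\mu(M_{\gamma_j}(u_{i,m}))>2^j$), and obeys a quasi-additivity bound $\|u_{i,m}\|_{\Phi_0^k(\Omega)}^{k+1}\le c(n,k)\sum_j\gamma_j^{k+1}\tau(\mu,\Omega,2^j)$; this is what closes the estimate and it has no analogue for sums.

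Second, the sufficiency in part (ii): Taylor-expanding $\varphi$ and summing the $L^{\beta i}(\mu)$ estimates from (i) term by term does not converge. The capacitary hypothesis only yields $\sup_t t^{(k+1)/\tilde q}/\tau(\mu,\Omega,t)\lesssim(\tilde q/\alpha\beta)^{(k+1)/\beta}C_3^{(k+1)/\tilde q}$, so the $i$-th term of the expansion is of size $\frac{\alpha^i}{i!}\,(i/\alpha)^{i}$ (up to polynomial factors), and by Stirling this does not tend to $0$; the strict inequality $\alpha<\alpha_0$ cannot help, since the hypothesis and the conclusion carry the \emph{same} $\alpha$ and there is no geometric gain across the terms. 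The paper circumvents this by splitting the series at $i\approx(k+1)/\beta$: the finitely many low-order terms are handled by H\"older against a fixed $L^{\tilde q}(\mu)$ norm, while for the tail one keeps the level-set integral intact, uses the capacitary weak-type inequality $cap_k(M_t(u),\Omega)\le t^{-(k+1)}\|u\|_{\Phi_0^k(\Omega)}^{k+1}$ to trade the excess powers $t^{\beta i-(k+1)}$, resums the exponential \emph{inside} the integral so that the factor $\mu(M_t(u))\exp\big(\alpha\,cap_k(M_t(u),\Omega)^{-\beta/(k+1)}\big)$ reappears and is bounded by $C_3$ (because $cap_k(M_t(u),\Omega)\ge\tau(\mu,\Omega,\mu(M_t(u)))$), and only then applies the strong-type inequality once. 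You need this resummation (or an equivalent device); as stated, your convergence claim is a genuine gap. A minor point: the strong-type estimate you invoke is an inequality (Theorem \ref{t41}(ii), proved in \S\ref{s4}), not a consequence of an exact identity for $\|u\|_{\Phi_0^k(\Omega)}^{k+1}$ in terms of capacities of sublevel sets; no such identity is available, though only the inequality is needed.
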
 

Often referred to as trace estimates (due to the fact that $\mu$ lives on $\Omega$ and may be the surface measure on a smooth submanifold of $\Omega$), the results in Theorem \ref{t13} will be proved in \S \ref{s5} through the $k$-Hessian capacitary weak and strong type estimates for $\|\cdot\|_{\Phi^k_0(\Omega)}$ presented in \S \ref{s4}, paralleling the classic ones for $k=1$ as developed in Adams-Hedberg's \cite[Chapter 7]{AH}. Here, it is worth pointing out that the case $k=1$ of Theorem \ref{t13} can be read off from the case $p=2$ of Maz'ya's \cite[Theorem 8.5 \& Remark 8.7]{Maz} (related to the Nirenberg-Sobolev inequality \cite[Lemma VI.3.1]{Cha}), and the case $q=k+1$ of Theorem \ref{t13} leads to a kind of Cheeger's inequality - for $k=1$ see also \cite{Che}, \cite[Theorem VI.1.2]{Cha}, and \cite{X1}.

\begin{remark}\label{r11} Two more comments are in order:

\begin{enumerate}[\rm(i)]
\item Upon adapting the relatively natural capacity of a compact $K\subset\Omega$ for $k$-Hessian operator below (cf. \S \ref{s2})
$$
cap_{k,3}(K,\Omega)=\inf\Big\{\|u\|^{k+1}_{\Phi^k_0(\Omega)}:\ u\in \Phi_0^k(\Omega)\cap C^2(\bar{\Omega}),\ u|_K\le -1,\ u\le 0\Big\},
$$
we can see that Theorem \ref{t13} without assuming that $\Omega$ is an origin-centerd Euclidean ball, still hold with $cap_k(\cdot,\Omega)$ being replaced by $cap_{k,3}(\cdot,\Omega)$.

\item While going along with demonstrating Theorems \ref{t12}-\ref{t13}, we will introduce the required notation. But here, we only write $c(a,b,c,d)$ for different constants (in different lines) depending only on $a$, $b$, $c$, $d$ - for instance - $X\leq c(a,b,c,d)Y\leq c(a,b,c,d)Z$ means that there exist two positive constants $c(a,b,c,d)$ and $\tilde{c}(a,b,c,d)$ depending only on $a$, $b$, $c$, $d$ such that $X\leq c(a,b,c,d)Y\leq \tilde{c}(a,b,c,d)Z$.

\end{enumerate}

\end{remark}
 
\section{Four alternatives to $cap_k(\cdot,\Omega)$}\label{s2} 


The purpose of this section is to define four new types of the $k$-Hessian capacity with $1\leq k\leq \frac{n}{2}$ and then to establish their relations with $cap_k(\cdot,\Omega)$. The forthcoming approach is entirely different from the Wolff's potential-based argument for Phuc-Verbitsky's \cite[Theorem 2.20]{PV} - the comparability between both $cap_k(\cdot,\Omega)$ and $Cap_{{\mathbf G}_{{2k}/{(k+1)}},k+1}(\cdot,\Omega)$ as $1\le k<n/2$.

\begin{definition}\label{d21} Suppose 
$$ 
\begin{cases}
1\leq k\leq \frac{n}{2};\\
1_E\ \hbox{stands\ for\ the\ characteristic\ function\ of}\ E\subset\Omega.
\end{cases}
$$ 
\begin{enumerate}[\rm(i)]

\item For a compact $K\subset \Omega$ let 
$$
\begin{cases}
cap_{k,1}(K,\Omega)=\sup\Big\{\int_K F_k[u]:\ u\in \Phi_0^k(\Omega)\cap C^2(\bar{\Omega}),\ -1<u<0\Big\};\\
cap_{k,2}(K,\Omega)=\inf\Big\{\int_{\Omega} F_k[u]:\ u\in \Phi_0^k(\Omega)\cap C^2(\bar{\Omega}),\ u\leq -1_K\Big\};\\
cap_{k,3}(K,\Omega)=\inf\Big\{-\int_{\Omega} uF_k[u]:\ u\in \Phi_0^k(\Omega)\cap C^2(\bar{\Omega}),\ u\leq -1_K\Big\};\\
cap_{k,4}(K,\Omega)=\sup\Big\{-\int_K uF_k[u]:\ u\in \Phi_0^k(\Omega)\cap C^2(\bar{\Omega}), -1<u<0\Big\}.
\end{cases}
$$
\item For an open set $O\subset\Omega$ and $j=1,2,3,4$ set
$$
cap_{k,j}(O,\Omega)=\sup\Big\{cap_{k,j}(K,\Omega):\ \textsl{compact}\ K\subset O\Big\}.
$$
\item For a general set $E\subset\Omega$ and $j=1,2,3,4$ put
$$
cap_{k,j}(E,\Omega)=\inf\Big\{cap_{k,j}(K,\Omega):\ \textsl{open}\ O\ \hbox{with}\ E\subset O\subset\Omega\Big\}.
$$
\end{enumerate}
\end{definition}

\begin{lemma}\label{l21} Suppose $1\leq k\leq \frac{n}{2}$. Let $\Omega$ be the Euclidean ball $B_r$ of radius $r$ centered at the origin. If $K$ is a compact subset of $\Omega$, then 
$$
cap_{k,j}(K,\Omega)=
\begin{cases}
\int_K F_k[R_k(K,\Omega)]\ \textsl{ for }\ j=1;\\
 \int_K (-R_k(K,\Omega))F_k[R_k(K,\Omega)]\ \textsl{ for }\ j=4,
\end{cases}
$$
where
\begin{equation*}\label{e21}
R_k(K,\Omega)(x)=\limsup_{y\to x}\Big(\sup\big\{ u(y): u\in\Phi_0^k(\Omega),\ u\leq-1_K \big\}\Big)
\end{equation*}
is the regularised relative extremal function associated with $K\subset\Omega$.
\end{lemma}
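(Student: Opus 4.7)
The plan is to reduce both identities to two ingredients from the Trudinger-Wang/Labutin theory of $k$-Hessian measures. First, the regularised extremal function $R := R_k(K,\Omega)$ lies in $\Phi^k(\Omega)$, vanishes on $\partial\Omega$, satisfies $R = -1$ quasi-everywhere on $K$, and $F_k[R]$ is a nonnegative Radon measure supported in $K$. Second, the comparison principle: for $u, v \in \Phi^k(\Omega)$ with $\liminf_{x \to \partial\Omega}(v - u)(x) \ge 0$,
$$
\int_{\{v < u\}} F_k[u] \;\le\; \int_{\{v < u\}} F_k[v].
$$
The hypothesis $\Omega = B_r$ enters only through the existence of smooth admissible barriers up to $\partial\Omega$, needed in the approximation step below.

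To prove $cap_{k,1}(K,\Omega) = \int_K F_k[R]$, I would argue the upper bound as follows. For a competitor $u \in \Phi_0^k(\Omega) \cap C^2(\bar\Omega)$ with $-1 < u < 0$, one has $R \le -1 < u$ on $K$ off a $k$-polar set of $F_k[R]$-measure zero, while $u = R = 0$ on $\partial\Omega$. Applying the comparison principle on $\Omega$ and using that $F_k[R]$ lives on $K$,
$$
\int_K F_k[u] \;\le\; \int_{\{R < u\}} F_k[u] \;\le\; \int_{\{R < u\}} F_k[R] \;=\; \int_K F_k[R],
$$
and taking the supremum over $u$ finishes one direction. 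The reverse inequality reduces to producing a sequence $u_j \in \Phi_0^k(\Omega) \cap C^2(\bar\Omega)$ with $-1 < u_j < 0$, $u_j \downarrow R$ pointwise, and $F_k[u_j] \rightharpoonup F_k[R]$ weakly as Radon measures; then $\int_K F_k[u_j] \to \int_K F_k[R]$ by the Trudinger-Wang continuity of the $k$-Hessian operator under monotone admissible limits.

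To prove $cap_{k,4}(K,\Omega) = \int_K(-R)F_k[R]$, I first note that $\int_K(-R)F_k[R] = \int_K F_k[R]$ since $R = -1$ holds $F_k[R]$-almost everywhere on $K$. The upper bound then follows from the $j=1$ case: for $u$ as above,
$$
-\int_K u F_k[u] \;\le\; \int_K F_k[u] \;\le\; \int_K F_k[R] \;=\; \int_K(-R)F_k[R],
$$
where the first inequality uses $-u < 1$. For the lower bound I reuse the sequence $\{u_j\}$; the monotone convergence $-u_j \uparrow -R$ on the support of $F_k[R]$ combined with the weak convergence of the $k$-Hessian measures yields $\int_K(-u_j)F_k[u_j] \to \int_K(-R)F_k[R]$.

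The main obstacle is the construction of the monotone smooth family $\{u_j\}$: one must simultaneously keep $u_j$ in $\Phi_0^k(\Omega) \cap C^2(\bar\Omega)$, enforce the strict two-sided bound $-1 < u_j < 0$, and secure the weak convergence of $F_k[u_j]$ to $F_k[R]$. Because standard mollification fails to preserve $k$-admissibility when $1 < k < n$, I expect to proceed by enlarging $K$ to a shrinking family of smooth $(k-1)$-convex compact sets $K_j \searrow K$, solving the Dirichlet problem for $F_k$ on $\Omega \setminus K_j$ with boundary data $0$ on $\partial\Omega$ and $-(1 - 1/j)$ on $\partial K_j$ (admissible solutions exist because $\partial\Omega$ is strictly $(k-1)$-convex for the ball), and then quoting the Trudinger-Wang stability of $F_k$ under decreasing admissible limits to pass to the limit.
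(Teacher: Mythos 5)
Your two upper bounds are essentially sound (and, for $j=4$, arguably more direct than the paper's, which routes through an exhaustion $K_i\searrow K$ and the Trudinger--Wang monotonicity), but the lower bounds $cap_{k,1}(K,\Omega)\ge\int_K F_k[R]$ and $cap_{k,4}(K,\Omega)\ge\int_K(-R)F_k[R]$ contain a genuine gap. The convergence you invoke --- $u_j\downarrow R$ with $F_k[u_j]\rightharpoonup F_k[R]$ weakly implies $\int_K F_k[u_j]\to\int_K F_k[R]$ --- is false: weak-$*$ convergence of Radon measures only yields $\limsup_j\int_K F_k[u_j]\le\int_K F_k[R]$ for a compact $K$, which is the direction you do not need, and here the limit measure $F_k[R]$ is carried by $K$ itself (typically by $\partial K$), so no portmanteau-type "negligible boundary" condition can rescue the claim. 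Worse, with the specific approximants you propose --- admissible solutions of $F_k=0$ in $\Omega\setminus K_j$ with data $-(1-1/j)$ on $\partial K_j$, extended as a constant inside $K_j$, where the smooth compacts $K_j$ strictly contain $K$ --- the measure $F_k[u_j]$ is concentrated on $\partial K_j$, which is disjoint from $K$; hence $\int_K F_k[u_j]=0=\int_K(-u_j)F_k[u_j]$ for every $j$ (already visible for $k=1$: take $K=\bar B_{1/2}$, $K_j=\bar B_{1/2+1/j}$ in $B_1\subset\mathbb R^3$, where $\Delta u_j$ is a multiple of surface measure on $\partial B_{1/2+1/j}$). So your approximation pushes the entire Hessian mass off $K$, and the suprema defining $cap_{k,1}$ and $cap_{k,4}$ are not approached at all.

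The paper obtains the lower bounds without any exterior approximation: since $\Omega=B_r$, the cited results of Labutin are used to assert that $R=R_k(K,\Omega)$ itself lies in $\Phi_0^k(\Omega)\cap C^2(\bar\Omega)$, and the strict constraint $-1<u<0$ is met by rescaling, $u_\epsilon=(1+\epsilon)^{-1}R$, which keeps the Hessian mass on $K$ and gives $\int_K(-u_\epsilon)F_k[u_\epsilon]=(1+\epsilon)^{-(k+1)}\int_K(-R)F_k[R]$; letting $\epsilon\to0$ finishes the estimate. (Shrinking smooth compacts $K_i\searrow K$ do appear in the paper, but only for the opposite inequality $cap_{k,4}(K,\Omega)\le\int_K(-R)F_k[R]$, where one integrates $F_k[R_k(K_i,\Omega)]$ over $K_i$ --- not over $K$ --- and then uses continuity of the capacity along decreasing compacts, so the mass-location problem never arises.) To repair your argument, replace the monotone exterior approximation by this rescaling device, or by any construction whose competitors have Hessian measures that actually charge $K$.
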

\begin{proof} As showed in \cite{Lab}, the function $x\mapsto R_k(K,\Omega)(x)$ is upper semicontinuous; is of $C^2(\bar{\Omega})$; and is the viscosity solution of the following Dirichlet problem:
\begin{equation*}\label{e22}
\begin{cases}
F_k[u]=0 &\mbox{in } \Omega \backslash K; \\
u=-1 & \mbox{on } \partial K;\\
u=0  & \mbox{on } \partial \Omega.
\end{cases}
\end{equation*}
Moreover, 
\begin{equation*}\label{e23}
cap_k(K,\Omega)= \int_K F_k[R_k(K,\Omega)].
\end{equation*}
Note that $R_k(K,\Omega)$ is in $\Phi_0^k(\Omega)\cap C^2(\bar{\Omega})\subset \Phi^k(\Omega)$. So, from Definition \ref{d21} it follows that
\begin{equation*}\label{e25}
cap_{k,1}(K,\Omega)=\int_K F_k[R_k(K,\Omega)].
\end{equation*}

To see the desired formula for $j=4$, let $u\in \Phi_0^k(\Omega)\cap C^2(\bar{\Omega})$. Then, for any $\epsilon>0$ there exists a function $v\in \Phi_0^k(\Omega)\cap C^2(\bar{\Omega})$ such that $v=(1+\epsilon)u$ and
\begin{eqnarray*}
&&(1+\epsilon)^{k+1}cap_{k,4}(K,\Omega)\\
&&=(1+\epsilon)^{k+1}\sup\Big\{\int_K (-u)F_k[u]:\ u\in \Phi_0^k(\Omega)\cap C^2(\bar{\Omega}),\ -1<u<0\Big\}\\
&&=\sup\Big\{\int_K (-v)F_k[v]:\ v\in \Phi_0^k(\Omega)\cap C^2(\bar{\Omega}),\ -1-\epsilon<v<0\Big\}.
\end{eqnarray*}
Since $R_k(K,\Omega)>-1-\epsilon$ in $K$, it follows that
$$
(1+\epsilon)^{-(k+1)}\int_K (-R_k(K,\Omega))F_k[R_k(K,\Omega)]\leq cap_{k,4}(K,\Omega).
$$
Letting $\epsilon\to 0$, we obtain
$$
\int_K (-R_k(K,\Omega))F_k[R_k(K,\Omega)]\leq cap_{k,4}(K,\Omega).
$$

To reach the reversed form of the last inequality, let $\{K_i\}$ be a sequence of compact subsets of $\Omega$ with smooth boundary and 
$$
K_{i+1}\subset K_i\quad\&\quad\cap_{i=1}^{\infty}K_i=K.
$$
Then, using the regularity of $\partial K_i$ we define
$$
u_i=R_k(K_i,\Omega)\in C(\bar{\Omega}).
$$
According to \cite[Lemma 2.1]{TW1}, we have the following monotonicity: if
$$
\begin{cases}
u,v\in \Phi^k(\Omega)\cap C^2(\bar{\Omega});\\
u\geq v\quad\hbox{in}\quad\Omega;\\
u=v\quad\hbox{on}\quad\partial \Omega,
\end{cases}
$$
then
\begin{equation*}\label{e27}
\int_{\Omega}F_k[u]\leq \int_{\Omega}F_k[v],
\end{equation*}
whence getting via \cite[(4.9)]{Lab}
$$ 
\int_{K} F_k[u]\le\int_{\{u_i\le u\}} F_k[u]\le \int_{\Omega}F_k[u]\le \int_{\Omega}F_k[u_i]=\int_{K_i}F_k[u_i].
$$
Letting $i\to \infty$ in the last inequality yields that
\begin{equation*}\label{e26}
\int_K (-u)F_k[u] \leq \int_K (-R_k(K,\Omega))F_k[R_k(K,\Omega)]
\end{equation*}
holds for any $u\in \Phi_0^k(\Omega)\cap C^2(\bar{\Omega})$ with $-1<u<0$. As a consequence, we get
$$
\int_K (-R_k(K,\Omega))F_k[R_k(K,\Omega)]\ge cap_{k,4}(K,\Omega),
$$
thereby completing the argument.
\end{proof}

\begin{theorem}\label{t21} Suppose $1\leq k\leq \frac{n}{2}$. Let $\Omega$ be the Euclidean ball $B_r$ of radius $r$ centered at the origin. If $E\subset\Omega$, then
$$
cap_k(E,\Omega)=cap_{k,j}(E,\Omega)\ \forall\ j=1,2,3,4.
$$
\end{theorem}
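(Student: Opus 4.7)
The plan is to prove the four equalities first for a compact $K\subset\Omega$ and then invoke the two-step construction in items (ii)--(iii) of Definition~\ref{d21}, which exactly mirrors the one defining $cap_k(\cdot,\Omega)$, to transfer the conclusion to open sets and then to arbitrary $E\subset\Omega$. So everything reduces to proving $cap_k(K,\Omega)=cap_{k,j}(K,\Omega)$ for $j=1,2,3,4$ and every compact $K\subset\Omega=B_r$.

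The cases $j=1,4$ are essentially already packaged in Lemma~\ref{l21}. That lemma gives $cap_{k,1}(K,\Omega)=\int_K F_k[R_k(K,\Omega)]$ and $cap_{k,4}(K,\Omega)=\int_K(-R_k(K,\Omega))F_k[R_k(K,\Omega)]$, while its proof recalls Labutin's identity $cap_k(K,\Omega)=\int_K F_k[R_k(K,\Omega)]$. The first and third yield $cap_k=cap_{k,1}$ at once. Since $R_k(K,\Omega)$ solves a Dirichlet problem with boundary value $-1$ on $K$ and $F_k=0$ off $K$, the Hessian measure $F_k[R_k(K,\Omega)]$ is concentrated on $K$, where $-R_k(K,\Omega)=1$; hence $cap_{k,4}=cap_{k,1}$ as well.

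For $j=2$, the inequality $cap_{k,2}\geq cap_{k,1}$ follows from a comparison argument: every admissible competitor $u\in\Phi_0^k(\Omega)\cap C^2(\bar\Omega)$ with $u\leq-1_K$ satisfies $u\leq R_k(K,\Omega)$ pointwise, and the total-mass monotonicity already invoked in the proof of Lemma~\ref{l21} (if $u\leq v$ with common boundary values, then $\int_\Omega F_k[u]\geq\int_\Omega F_k[v]$) gives $\int_\Omega F_k[u]\geq\int_\Omega F_k[R_k(K,\Omega)]=\int_K F_k[R_k(K,\Omega)]=cap_{k,1}(K,\Omega)$. The matching upper bound is obtained by testing with (a smooth approximation of) $R_k(K,\Omega)$ itself, which is admissible in the defining class of $cap_{k,2}$.

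The equality $cap_k=cap_{k,3}$ is the delicate one and is where I expect the main obstacle. The upper bound $cap_{k,3}\leq cap_{k,1}$ is immediate upon plugging in $R_k(K,\Omega)$, because $R_k(K,\Omega)=-1$ on the support of its Hessian measure gives $-\int_\Omega R_k(K,\Omega)F_k[R_k(K,\Omega)]=\int_K F_k[R_k(K,\Omega)]$. For the reverse inequality I plan to combine two ingredients. The first is a Holder/Garding-type inequality for the Hessian form, which in our normalisation reads
$$-\int_\Omega u\,F_k[v]\leq\Bigl(-\int_\Omega u\,F_k[u]\Bigr)^{\frac{1}{k+1}}\Bigl(-\int_\Omega v\,F_k[v]\Bigr)^{\frac{k}{k+1}}$$
for $k$-admissible $u,v\in\Phi_0^k(\Omega)\cap C^2(\bar\Omega)$, a standard consequence of the Trudinger-Wang mixed-Hessian integration by parts coupled with Garding's cone inequality. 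The second is the pointwise obstacle: for $v=R_k(K,\Omega)$ and any admissible $u\leq-1_K$, the measure $F_k[v]$ lives on $K$ where $-u\geq 1=-v$, which gives the elementary bound $-\int_\Omega u\,F_k[v]\geq -\int_\Omega v\,F_k[v]$. Chaining these two estimates and cancelling the common power of $-\int_\Omega v\,F_k[v]$ produces $-\int_\Omega u\,F_k[u]\geq cap_{k,1}(K,\Omega)$, and taking the infimum over $u$ completes the argument. The careful verification of the mixed-Hessian Holder inequality, together with the regularity/approximation of $R_k(K,\Omega)$ needed to use it as a legitimate test function in each of the four defining classes, will be the main technical burden.
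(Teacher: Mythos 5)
Your reduction to compact sets and your handling of $j=1,2,4$ essentially coincide with the paper's argument: both rest on Lemma \ref{l21}, Labutin's identity $cap_k(K,\Omega)=\int_K F_k[R_k(K,\Omega)]$, the Trudinger--Wang total-mass monotonicity, and the use of $R_k(K,\Omega)$ itself as a test function. Where you genuinely diverge is the lower bound for $cap_{k,3}$. The paper never proves $cap_{k,3}\ge cap_k$ head-on; it closes the single cyclic chain $cap_{k,1}\le cap_{k,2}\le cap_{k,3}\le cap_{k,4}\le cap_{k,1}$, whose only nontrivial links are $cap_{k,1}\le cap_{k,2}$ (monotonicity applied to any competitor $u\le R_k(K,\Omega)$) and $cap_{k,3}\le cap_{k,4}$ (testing the infimum with $R_k$), while $cap_{k,4}\le cap_{k,1}$ and $cap_{k,2}\le cap_{k,3}$ are read off from Definition \ref{d21}. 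You instead prove each equality two-sidedly and, for $cap_{k,3}\ge cap_k$, invoke the mixed-Hessian H\"older (G\aa rding-type energy) inequality $-\int_\Omega u\,F_k[v]\le\big(-\int_\Omega u\,F_k[u]\big)^{1/(k+1)}\big(-\int_\Omega v\,F_k[v]\big)^{k/(k+1)}$ combined with the obstacle bound on $K$; that inequality is not in the paper but is standard for $k$-admissible functions vanishing on $\partial\Omega$ (see Wang \cite{W1,W2} and Trudinger--Wang \cite{TW1}), so your argument is sound, and it even yields the stronger per-competitor statement $\|u\|_{\Phi_0^k(\Omega)}^{k+1}\ge cap_k(K,\Omega)$ for every admissible $u\le -1_K$, while sidestepping the paper's rather terse ``from the definitions'' justification of $cap_{k,2}\le cap_{k,3}$. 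The price is importing and verifying one extra analytic tool, whereas the paper's cyclic chain is more economical. Both routes share the same regularity caveat, which you correctly flag: using $R_k(K,\Omega)$ (or an approximation) as a legitimate member of the $C^2(\bar\Omega)$ classes in Definition \ref{d21}, and the fact that $R_k=-1$ on $K$ up to an $F_k[R_k]$-null set, are taken from Labutin exactly as the paper does.
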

\begin{proof} By Definition \ref{d21}, it is enough to prove that if $E=K$ is a compact subset of $\Omega$ then 
$$
cap_{k,1}(K,\Omega)\leq cap_{k,2}(K,\Omega)\leq cap_{k,3}(K,\Omega)\leq cap_{k,4}(K,\Omega)\leq cap_{k,1}(K,\Omega).
$$

To do so, note first that the inequalities 
$$
cap_{k,4}(K,\Omega)\leq cap_{k,1}(K,\Omega)\quad\&\quad
cap_{k,2}(K,\Omega)\leq cap_{k,3}(K,\Omega)
$$ 
just follow from Definition \ref{d21}. Next, an application of Lemma \ref{l21} yields
$$
cap_{k,1}(K,\Omega)=cap_k(K,\Omega)=\int_K F_k[R_k(K,\Omega)]=\int_{\Omega} F_k[R_k(K,\Omega)].
$$
Thus, from the definition of $R_k(K,\Omega)$ and the monotonicity described in the proof of Lemma \ref{l21}, it follows that for any $u\in \Phi_0^k(\Omega)\cap C^2(\bar{\Omega})$ satisfying $u|_K\leq -1$ and $u<0$ one has
$$
\int_{\Omega} F_k[R_k(K,\Omega)]\leq \int_{\Omega} F_k[u].
$$
Upon minimizing the right-hand side of the above inequality we obtain
$$
cap_{k,1}(K,\Omega)=\int_{\Omega} F_k[R_k(K,\Omega)]\leq cap_{k,2}(K,\Omega).
$$
Finally, by the definitions of $R_k(K,\Omega)$ and $cap_{k,3}(K,\Omega)$ we achieve
$$
cap_{k,3}(K,\Omega)\le \int_{\Omega} (-R_k(K,\Omega))F_k[R_k(K,\Omega)]= \int_K (-R_k(K,\Omega))F_k[R_k(K,\Omega)],
$$
thereby finding 
$$
cap_{k,3}(K,\Omega)\leq cap_{k,4}(K,\Omega).
$$ 
\end{proof}

\begin{corollary}\label{c21} Let $\Omega$ be the Euclidean ball $B_r$ of radius $r$ centered at the origin. If $E\subset\Omega$, then
$$
cap_1(E,\Omega)=\inf\Big\{\int_{\Omega}|Du|^2:\ u\in W^{1,2}(\Omega),\ u \geq 1_E\}\equiv 2\hbox{-}cap(E,\Omega),
$$
where $W^{1,2}(\Omega)$ stands for the Sobolev space of all functions whose distributional derivatives are in $L^2(\Omega)$.
\end{corollary}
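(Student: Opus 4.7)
The plan is to combine Theorem \ref{t21} (which identifies $cap_1$ with $cap_{1,3}$) with the observation that, for $k=1$, the operator $F_1=\Delta$ allows integration by parts to convert the energy $-\int_\Omega u\,\Delta u$ into the Dirichlet integral $\int_\Omega |Du|^2$, turning the definition of $cap_{1,3}$ into the classical variational $2$-capacity. The appearance of $W^{1,2}(\Omega)$ in the statement is understood as $W^{1,2}_0(\Omega)$, which is the only reading that makes the infimum nontrivial.

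First I would invoke Theorem \ref{t21} to reduce to proving $cap_{1,3}(E,\Omega)=2\text{-}cap(E,\Omega)$. By the inner/outer regularity built into Definition \ref{d21} and by the corresponding standard property of the variational $2$-capacity, it is enough to treat compact $E=K\subset\Omega$.

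For the easy direction $2\text{-}cap(K,\Omega)\le cap_{1,3}(K,\Omega)$: given any admissible $u\in\Phi_0^1(\Omega)\cap C^2(\bar\Omega)$ with $u\le -1_K$, set $v=-u\ge 0$. Then $v\ge 1_K$, $v\in W^{1,2}_0(\Omega)$, and integration by parts (justified because $v|_{\partial\Omega}=0$) gives
\[
-\int_\Omega u\,\Delta u=\int_\Omega |Du|^2=\int_\Omega |Dv|^2,
\]
so $v$ is admissible for $2\text{-}cap(K,\Omega)$ with the same energy; taking the infimum yields the inequality.

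For the reverse direction $cap_{1,3}(K,\Omega)\le 2\text{-}cap(K,\Omega)$ I would reuse the potential-theoretic construction from Lemma \ref{l21}. Choose $K_i\downarrow K$ with smooth boundary and set $u_i=R_1(K_i,\Omega)$; then $v_i=-u_i$ is exactly the classical $2$-capacitary potential of $K_i$ in $\Omega$, so that $\int_\Omega |Dv_i|^2=2\text{-}cap(K_i,\Omega)$. Using $u_i$ as a competitor for $cap_{1,3}(K_i,\Omega)$ and integrating by parts gives $cap_{1,3}(K_i,\Omega)\le 2\text{-}cap(K_i,\Omega)$. Since $K\subset K_i$ entails $cap_{1,3}(K,\Omega)\le cap_{1,3}(K_i,\Omega)$, letting $i\to\infty$ and applying the continuity from above for decreasing compact sets (a property listed in \S \ref{s1} for $cap_1$, and classical for the variational $2$-capacity) closes the loop. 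The main obstacle is the $C^2(\bar\Omega)$ smoothness demand inside $cap_{1,3}$: the $2$-capacitary potential of a general compact $K$ is only Lipschitz across $\partial K$, and this is precisely why the approximation $K_i\downarrow K$ by smooth compacta (already used in the proof of Lemma \ref{l21}, together with the monotonicity lemma of Trudinger–Wang) is needed to make $u_i$ a legal competitor before one passes to the limit.
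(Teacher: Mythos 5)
Your proposal is correct and follows essentially the same route as the paper: reduce to $cap_{1,3}$ via Theorem \ref{t21}, convert $-\int_\Omega u\,\Delta u$ into $\int_\Omega|Du|^2$ by integration by parts, and identify the extremal function $R_1(K,\Omega)$ with the classical $2$-capacitary equilibrium potential. The only difference is that you spell out the approximation $K_i\downarrow K$ by smooth compacta to legitimize $R_1$ as a $C^2(\bar\Omega)$ competitor (and the reading of $W^{1,2}$ as $W^{1,2}_0$), details the paper's shorter argument leaves implicit.
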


\begin{proof} Thanks to the well-known metric properties of the Wiener capacity $2\hbox{-}cap(\cdot,\Omega)$ (cf. \cite[Chapter 2]{Maz0}), we only need to check that
$$
cap_1(E,\Omega)=2\hbox{-}cap(E,\Omega)\ \forall\ \hbox{compact } E\subset\Omega.
$$
Since $F_1[u]=\Delta u$, for any $u\in \Phi_0^k(\Omega)\cap C^2(\bar{\Omega})$ with $u\leq -1_E$ we apply an integration-by-part to obtain
$$
\int_{\Omega}(-u)F_1[u]=\int_{\Omega}(-u)\Delta u=\int_{\Omega}|Du|^2=\int_{\Omega}|D(-u)|^2.
$$
Upon considering the unique solution of the Dirichlet problem:
$$
\begin{cases}
F_1[u]=\Delta u=0 &\mbox{in } \Omega \backslash E; \\
-u=1 & \mbox{on } \partial E;\\
u=0 & \mbox{on } \partial \Omega,
\end{cases}
$$
we get
$$
cap_{1,3}(E,\Omega)=\int_{\Omega}(-R_1(E,\Omega))F_1[R_1(E,\Omega)]=\int_{\Omega}|D(-R_1(E,\Omega))|^2=2\hbox{-}cap(E,\Omega),
$$
whence reaching the conclusion via Theorem \ref{t21}.
\end{proof}

\section{Isocapacitary inequalities}\label{s3}

\subsection{Proof of Theorem \ref{t12} (i)}\label{s31} 

{\it Step $(i)_1$}. We start with proving that if $E\subset B_r$ and $1\leq k< \frac{n}{2}$ then there is a constant $c(n,k,q,|\Omega|)>0$ depending only on $n,k,q$, and $|\Omega|$ such that
\begin{equation*}
\label{e31}
|E|^\frac{k+1}{q}\leq c(n,k,q,|\Omega|) \big(cap_k(E, B_r)\big).
\end{equation*}

Without loss of generality, we may assume that $E$ is a compact set in $B_r$. Now, by Theorem \ref{t11} (i) we have that if $1\leq q\leq k^*$ then 
$$
\|u\|_{L^{q}(B_r)}\leq c(n,k,q,r)\|u\|_{\Phi_0^k(B_r)}\ \forall\ u\in \Phi_0^k(B_r),
$$
where $c(n,k,q,r)>0$ is a constant depending only on $n,k,q,r$.

Since $R_k(E,B_r)\in \Phi_0^k(B_r)$, from the definition of $\|\cdot\|_{\Phi_0^k(B_r)}$ it follows that
$$ 
\|R_k(E,B_r)\|_{L^q(B_r)}\leq c(n,k,q,r)\left(\int_{B_r} \big(-R_k(E,B_r)\big)F_k[R_k(E,B_r)]\right)^{\frac1{k+1}}.
$$
In other words, Theorem \ref{t21} is employed to derive 
$$
\|R_k(E,B_r)\|_{L^q(B_r)}\leq c(n,k,q,r) \Big(cap_k(E,B_r)\Big)^{\frac1{k+1}}.
$$
Thus, by the definition of $R_k(E,B_r)$ we achieve
\begin{eqnarray*}
&&|E|^\frac{k+1}{q}\\
&&\le \Big(\int_E |R_k(E,B_r)|^q \Big)^{\frac{k+1}{q}}\\
&&\le \Big(\int_{B_r} |R_k(E,B_r)|^q \Big)^{\frac{k+1}{q}}\\
&&\le \|R_k(E,B_r)\|_{L^q(B_r)}^{k+1}\\
&&\le\big(c(n,k,q,r)\big)^{k+1} cap_k(E,B_r).
\end{eqnarray*}

{\it Step $(i)_2$}. Next, we verify that if $E\subset\Omega$ and $1\leq k< \frac{n}{2}$ then there is a constant $c(n,k,q,|\Omega|)>0$ depending only on $n,k,q$ and $|\Omega|$ such that
\begin{equation*}
\label{e32}
|E|^\frac{k+1}{q}\leq c(n,k,q,|\Omega|)\big(cap_k(E,\Omega)\big)^n.
\end{equation*}

Without loss of generality, we may assume that $E$ is a compact subset of $\Omega$ containing the origin. Then there exists a ball $B_r$ centered at the origin with radius $r=\hbox{diam}(\Omega)$ such that $\Omega\subset B_r$.

Since $1\leq k< \frac{n}{2}$, by {\it Step $(i)_1$} and \cite[Lemma 4.1(ii)]{Lab} we obtain
$$
|E|^\frac{k+1}{q}\leq c(n,k,q,r)cap_k(E,B_r)\leq c(n,k,q,|\Omega|)cap_k(E,\Omega),
$$
as desired.

{\it Step $(i)_3$.} Particularly, for $q=\frac{n(k+1)}{n-2k}$ we make the following analysis. Suppose $E$ is a compact set contained in $B_r$ - a ball centered at the origin with radius $r>0$. We claim that if $1\leq k< \frac{n}{2}$ then there is a constant $c(n,k)>0$ depending only on $n,k$ such that
\begin{equation*}
\label{e311}
|E|^\frac{n-2k}{n}\leq c(n,k) cap_k(E, \mathbb{R}^n).
\end{equation*}
In fact, according to Dai-Bao's paper \cite{DaB} there exists a unique viscosity solution to the Dirichlet problem stated in the proof of Lemma \ref{l21}. Such a solution guarantees that there exists a unique $R_k(E,\mathbb{R}^n)$ satisfying
$$
R_k(E,\mathbb{R}^n)=\lim_{r\to \infty} R_k(E,B_r).
$$
Now, by the previous {\it Step $(i)_1$} we have that if $q=k^*$ then
$$
|E|^{\frac{n-2k}{n}}\leq c(n,k,r) cap_k(E,B_r),
$$
whence reaching the above claim through letting $r\to \infty$ in the above estimate.

Now, using the same argument for {\it Step $(i)_2$} we get
$$
|E|^\frac{n-2k}{n}\leq c(n,k) cap_k(E,\mathbb{R}^n) \leq c(n,k)cap_k(E,\Omega).
$$

{\it Step $(i)_4$.}  Following the above argument and applying \cite[Lemma 4.1(ii)]{Lab}, Theorem \ref{t11} (ii) and Theorem \ref{t21} we can get that
$$
|E|^\frac{k+1}{q}\leq c(n,k,q,\textnormal{diam}(\Omega))cap_k(E, \Omega)
$$
holds for $k=\frac{n}2$ and $1\le q<\infty$.

\subsection{Proof of Theorem \ref{t12} (ii)}\label{s32}

{\it Step $(ii)_1$}. Partially motivated by \cite{A, CL,XZ}, we begin with a slight improvement of the Moser-Trudinger inequality stated in Theorem \ref{t11} (ii): if  $k=\frac{n}{2}$ then there is a constant $c(n)>0$ depending only on $n$ such that

\begin{equation*}
	\label{e16ee}
\sup_{0<\|u\|_{\Phi_0^k(\Omega)}<\infty}\int_\Omega\exp\left(\alpha\Big(\frac{|u|}{\|u\|_{\Phi_0^k(\Omega)}}\Big)^\beta\right)\le c(n)\big(\hbox{diam}(\Omega)\big)^n,
	\end{equation*}
	where $\alpha,\beta$ are the constants determined in Theorem \ref{t11} (ii).
	
Without loss of generality, we may assume that $\Omega$ contains the origin. Then there exists a ball $B_r$ centered at the origin with radius $\hbox{diam}(\Omega)$ such that $\Omega\subset B_r$.
Following the argument for \cite[Theorem 1.2]{TiW}, we have that, for any radial function $u=u(s)$ in $\Phi_0^k(B_r)$, there exists a ball $B_{\hat{r}}\subset \mathbb{R}^{\frac{n}{2}+1}$ with radius $\hat{r}=r^{\frac{2n}{n+2}}$ and a radial function $v(s)=u(s^{\frac{n+2}{2n}})$ in $\Phi_0^k(B_{\hat{r}})$ such that
\begin{eqnarray*}
&&\int_{\Omega}\exp\left(\alpha\big(\frac{|u|}{\|u\|_{\Phi_0^k(B_r)}}\big)^{\beta}\right)\\
&&\le \Big(\frac{n+2}{2n}\Big)\Big(\frac{\omega_{n-1}}{\omega_{\frac{n}{2}}}\Big)\int_{B_{\hat{r}}}\exp\left(\frac{\alpha}{c_0^{\beta}}\Big( \frac{|v|}{\|Dv\|_{L^{\frac{n}{2}+1}(B_{\hat{r}})}}\Big) \right)\\
&&\le c(n) |B_{\hat{r}}|\\
&&\le c(n) {\hat{r}}^{\frac{n}{2}+1}\\
&&\le c(n) r^n,
\end{eqnarray*}
where 
$$
c_0^{\beta}=\left(\frac{\omega_{n-1}}{k\omega_{n/2}}\small{\begin{pmatrix} n-1\\k-1  \end{pmatrix}}\big(\frac{2n}{n+2}\big)^{\frac{n}{2}} \right)^{\frac{1}{k+1}}.
$$
Thus, by \cite[Lemma 3.2]{TiW} we achieve
\begin{eqnarray*}
&&\sup\left\{ \int_{\Omega}\exp\Big(\alpha\big(\frac{|u|}{\|u\|_{\Phi_0^k(\Omega)}}\big)^{\beta}\Big):\ u\in \Phi_0^k(\Omega)\ \&\ 0<\|u\|_{\Phi_0^k(\Omega)}<\infty\right\}\\
&&\le \sup\left\{ \int_{\Omega}\exp\Big(\alpha\big(\frac{|u|}{\|u\|_{\Phi_0^k(\Omega)}}\big)^{\beta}\Big):\ u\in \Phi_0^k(\Omega)\ \textnormal{is\ radial}\right\}\\
&&\le c(n)\big(\textnormal{diam}(\Omega)\big)^n,
\end{eqnarray*}
as desired.

{\it Step $(ii)_2$}. We utilize the last step to check the remaining part of Theorem \ref{t12} (ii). Since $k=\frac{n}{2}$, by Lemmas \ref{l21}\&\ref{e16ee} and Theorem \ref{t21} we have
\begin{eqnarray*}
&&|E|\exp\Big(\frac{\alpha}{\big(cap_k(E,B_r)\big)^{\frac{\beta}{k+1}}}\Big)\\
&&=|E|\exp\Big(\frac{\alpha}{\big(cap_{k,3}(E,B_r)\big)^{\frac{\beta}{k+1}}}\Big)\\
&&\le\sup\Big\{\int_E \exp\Big(\alpha\big( \frac{|u|}{\|u\|_{\Phi_0^k(B_r)}}\big)^{\beta} \Big):\ u\in \Phi_0^k(B_r)\Big\}\\
&&\le c(n)\big(\hbox{diam}(B_r)\big)^n,
\end{eqnarray*}
i.e.,
$$
\frac{\alpha}{\big(cap_k(E,\Omega)\big)^{\frac{\beta}{k+1}}}\leq \frac{\alpha}{\big(cap_k(E,B_r)\big)^{\frac{\beta}{k+1}}}\leq \ln\Big( {c(n)|E|^{-1}\big(\hbox{diam}(\Omega)\big)^n}\Big).
$$
Now, a simple calculation gives the desired inequality.

\section{Capacitary weak and strong type estimates for $\Phi_0^k(\Omega)$}\label{s4}

Through introducing a new measure-theoretic method that is evidently different from the usual dyadic-splitting-level-set approach to the capacitary weak and strong type estimates for the Wiener capacity $2\hbox{-}cap(\cdot,\Omega)=cap_1(\cdot,\Omega)$ (with $\Omega$ being an origin-centered Euclidean ball) -- see also Adams-Hedberg's \cite[Chapter 7]{AH} and the references therein for an account on the classical treatment of some relevant capacity estimates, we establish the following $k$-Hessian capacitary weak and strong type inequalities.

\begin{theorem}\label{t41} Suppose that $\Omega$ is an origin-centered Euclidean ball. If $u\in \Phi_0^k(\Omega)\cap C^2(\bar{\Omega})$ and $1\le k\le\frac{n}{2}$, then one has:

\begin{enumerate}[\rm(i)]
\item the capacitary weak type inequality
$$
cap_k\big(\{x\in \Omega:\ |u(x)|\ge t\},\Omega\big)\leq {t^{-(k+1)}}\|u\|_{\Phi_0^k(\Omega)}^{k+1}\ \ \forall\ \ t>0;
$$
\item the capacitary strong type inequality
$$
\int_0^{\infty} t^k cap_k\big(\{x\in \Omega: |u(x)|\ge t\},\Omega\big) \,dt\leq \Big(\frac{a}{a-1}\Big)^{k+1}(\ln a) \|u\|_{\Phi_0^k(\Omega)}^{k+1}\ \ \forall\ \ a>1.
$$
\end{enumerate}
\end{theorem}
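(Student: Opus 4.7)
\textbf{Part (i).} This is direct via Theorem \ref{t21}. Since $cap_k(\cdot,\Omega)=cap_{k,3}(\cdot,\Omega)$ and $v:=u/t$ lies in $\Phi_0^k(\Omega)\cap C^2(\bar\Omega)$ with $v\le -1$ on the level set $E_t:=\{u\le -t\}=\{|u|\ge t\}$, the $k$-homogeneity $F_k[u/t]=t^{-k}F_k[u]$ immediately gives
\[
cap_k(K,\Omega)=cap_{k,3}(K,\Omega)\le -\int_\Omega v\,F_k[v]=\frac{1}{t^{k+1}}\int_\Omega(-u)F_k[u]=\frac{1}{t^{k+1}}\|u\|_{\Phi_0^k(\Omega)}^{k+1}
\]
for every compact $K\subset E_t$; (i) then follows from the metric exhaustion properties of $cap_k(\cdot,\Omega)$ recorded in \S\ref{s12}.

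\textbf{Part (ii).} The idea is to test $cap_{k,3}(E_t,\Omega)$ not against $u/t$ globally but against a slice of $u$ localized between two consecutive levels $-t$ and $-s$, with $0<s<t$. Consider the profile
\[
w_{s,t}(x):=\frac{\max(u(x),-t)-\max(u(x),-s)}{t-s},
\]
which lies in $[-1,0]$, equals $-1$ on $E_t$, equals $0$ on $\{u\ge -s\}$, and is linear in $u$ on the slice $A_{s,t}:=\{-t\le u\le -s\}$; on $A_{s,t}$ the translation invariance and $k$-homogeneity of $F_k$ yield $F_k[w_{s,t}]=(t-s)^{-k}F_k[u]$. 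Granting admissibility of $w_{s,t}$ in $cap_{k,3}$ (see the main obstacle below), this delivers the refined bound
\[
cap_k(E_t,\Omega)\le \frac{1}{(t-s)^{k+1}}\int_{A_{s,t}}\bigl(-(u+s)\bigr)F_k[u].
\]
Specializing $s=t/a$, multiplying by $t^k$, integrating in $t$ over $(0,\infty)$, and swapping orders by Fubini, one computes that for each $x$ with $-u(x)=r$ the inner $t$-integral collapses to $(a/(a-1))^{k+1}\bigl(r\ln a-r(a-1)/a\bigr)$; using $\ln a-(a-1)/a\le \ln a$ together with $\int_\Omega(-u)F_k[u]=\|u\|_{\Phi_0^k(\Omega)}^{k+1}$ produces the required bound with sharp constant $(a/(a-1))^{k+1}\ln a$.

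\textbf{Main obstacle.} The function $w_{s,t}$ is \emph{not} $k$-convex in general: written as a difference of two $k$-convex functions $\max(u,-t)/(t-s)$ and $\max(u,-s)/(t-s)$, the upper truncation at $0$ across the level set $\{u=-s\}$ introduces a concave kink whose second-order effect contributes a negative singular part to $F_k[w_{s,t}]$. Hence $w_{s,t}$ is not directly admissible in Definition \ref{d21} (iii), and the slice bound above is only formal. This is precisely the step at which the ``new measure-theoretic method'' announced at the start of \S\ref{s4} has to take over: either one extends $cap_{k,3}(\cdot,\Omega)$ to Lipschitz $k$-admissible test functions with $F_k[\cdot]$ interpreted as a Trudinger--Wang Hessian measure (and checks that the extension agrees with Definition \ref{d21} on compact sets), or one approximates $w_{s,t}$ monotonically by a family of $C^2$ $k$-convex competitors in $\Phi_0^k(\Omega)$ and passes to the limit using weak continuity of $F_k$ along monotone $k$-convex sequences. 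Either route preserves the constants in the slice computation, and the Fubini step above then completes the proof of (ii).
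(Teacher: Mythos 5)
Your part (i) is correct and is in substance the paper's own argument: both proofs reduce, via the capacity identifications of Theorem \ref{t21}, to testing with the rescaled function $u/t$ and using the $k$-homogeneity of $F_k$ (the paper routes the comparison through the extremal function $R_k$, you plug $u/t$ into $cap_{k,3}$ directly), so there is nothing to add there. In part (ii), however, the obstacle you flag is a genuine gap, and neither of your two proposed remedies closes it. The problem is not lack of smoothness but lack of $k$-convexity: across $\{u=-s\}$ your $w_{s,t}$ behaves like $\min(\ell,0)$ with $\ell$ linear, which already fails the viscosity inequality $F_1[\cdot]\ge 0$, so $w_{s,t}\notin\Phi^k(\Omega)$. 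Consequently (a) the Trudinger--Wang Hessian measure is only defined for $k$-convex functions and, more to the point, the definition of $cap_{k,3}$ admits only $k$-convex competitors, so no signed-measure interpretation of $F_k[w_{s,t}]$ produces the inequality $cap_{k,3}(E_t,\Omega)\le\int_\Omega(-w_{s,t})F_k[w_{s,t}]$; and (b) $w_{s,t}$ cannot be obtained as a monotone (or locally uniform) limit of functions in $\Phi_0^k(\Omega)$, since such limits are again $k$-convex while $w_{s,t}$ is not. As written, the slice bound, and hence all of (ii), is unproved.

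The paper's proof repairs exactly this step by a change of domain rather than a regularization, and it is worth recording because your Fubini bookkeeping then goes through essentially verbatim. Instead of truncating $u$ from above at the level $-s$ inside $\Omega$, one works in the open super-level set $\Omega_t(u)=\{x\in\Omega:|u(x)|>t\}$ and tests with the normalized truncation of $u$ between the levels $t$ and $at$, i.e.\ with $\tilde u=\max\{(u+t)/((a-1)t),\,-1\}$ on $\Omega_t(u)$: this is a maximum of two $k$-convex functions, vanishes on $\partial\Omega_t(u)$, and so lies in $\Phi_0^k(\Omega_t(u))$ --- the troublesome upper kink has been converted into the zero boundary value of the smaller domain. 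One then bounds $cap_{k,3}(M_{at}(u),\Omega_t(u))$ by the energy of $\tilde u$, localizes that energy to the slice $\Omega_t(u)\setminus M_{at}(u)$ through the divergence-structure identity $\int(-v)F_k[v]=k^{-1}\int v_iv_jF_k^{ij}[D^2v]$ (the gradient of $\tilde u$ vanishes off the slice), returns to $cap_k(M_{at}(u),\Omega)$ by the domain monotonicity of $cap_k$ listed in \S\ref{s12}, and concludes with the same integration in $t$ that you perform, phrased there as $\int_0^\infty\phi(\Omega_t(u)\setminus M_{at}(u))\,t^{-1}dt\le\ln a$ for the normalized measure $\phi(E)=\|u\|_{\Phi_0^k(\Omega)}^{-(k+1)}\int_E(-u)F_k[u]$, followed by the substitution $\lambda=at$; this yields the stated constant $(\tfrac{a}{a-1})^{k+1}\ln a$. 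Note that on $\{u<-s\}$ your $w_{s,t}$ coincides with precisely such a test function, so the missing idea is to take the capacity relative to the sub-level domain and pull it back to $\Omega$ by monotonicity, not to force admissibility of $w_{s,t}$ on all of $\Omega$.
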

\begin{proof} (i) For $t>0$ let $v=t^{-1}{u}$. By Theorem \ref{t21} we obtain
\begin{eqnarray*}
&&cap_k\big(\{ x\in \Omega: |v(x)|\ge 1\},\Omega\big)\\
&&=\sup\left\{\int_{\{|v|\ge 1\}}(-f)F_k[f]:\ \ 
f\in\Phi_0^k(\Omega)\cap C^2(\bar{\Omega}),\ -1<f<0\right\}\\
&&= \int_{\{|v|\ge 1\}}(-R(\{|v|\ge 1\},\Omega))F_k[R(\{|v|\ge 1\},\Omega)]\\
&&\le \int_{\Omega}(-R(\{|v|\ge 1\},\Omega))F_k[R(\{|v|\ge 1\},\Omega)]\\
&&\le \int_{\Omega}(-v)F_k[R(\{|v|\ge 1\},\Omega)]\\
&&\le \int_{\Omega}(-v)F_k[v],
\end{eqnarray*}
thereby getting
$$
cap_k\big(\{x\in \Omega: |u(x)|\ge t\},\Omega\big)\leq {t^{-(k+1)}}\int_{\Omega}(-u)F_k[u].
$$

(ii) For $t>0$ let 
$$
M_t(u)=\{x\in\Omega: |u(x)|\ge t\}\quad\&\quad \Omega_t(u)=\{x\in\Omega: |u(x)|>t\}.
$$
Without loss of generality, we may assume $\|u\|_{\Phi_0^k(\Omega)}<\infty$, and then define a normalized set function (cf. \cite[Theorem 2.2-Corollary 2.3]{KM}-based argument for \cite[Theorem 3.1]{CMS})
$$
\phi(E)\equiv\phi(E,\Omega)=\frac{\int_E (-u)F_k[u]}{\|u\|^{k+1}_{\Phi_0^k(\Omega)}}\quad\forall\quad E\subset\Omega.
$$
Note that
$$
\begin{cases}
\phi(\emptyset)=0\quad\&\quad\phi(E)\ge 0\ \forall\ E\subset\Omega;\\
\phi(E_1)\le\phi(E_2)\ \forall\ E_1\subset E_2\subset\Omega;\\
\phi(E_1\cup E_2)=\phi(E_1)+\phi(E_2)\ \forall\ E_1, E_2\subset\Omega\ \hbox{ with }\ E_1\cap E_2=\emptyset.
\end{cases}
$$
So, $\phi$ is a measure. Consequently, for a given constant $a>1$ we estimate
\begin{eqnarray*}
&&\int_0^{\infty}\phi(\Omega_t(u)\backslash M_{at}(u))\,\frac{dt}{t}\\
&&\le\int_0^{\infty}\phi(M_t(u)\backslash M_{at}(u))\,\frac{dt}{t}\\
&&=\int_0^{\infty} \int_t^{at} d\phi(M_s(u))\, \frac{dt}{t}\\
&&= \int_0^{\infty} \int_s^{\frac{s}{a}} \frac{dt}{t}\, d\phi(M_s(u))\\
&&=-(\ln a) \int_0^{\infty}\, d\phi(M_s(u))\\
&&= \phi(M_0(u))\ln a\\
&&\le\phi(\Omega)\ln a\\
&&=\ln a,
\end{eqnarray*}
whence deriving
$$
\int_0^{\infty}\big\|u 1_{\Omega_t(u)\backslash M_{at}(u)}\big\|^{k+1}_{\Phi_0^k(\Omega)}\frac{dt}{t}=\int_{\Omega_t(u)\setminus M_{at}(u)}(-u)F_k[u]\leq \|u\|^{k+1}_{\Phi_0^k(\Omega)}\ln a.
$$
Now, if 
$$
\tilde{u}=\max\Big\{\frac{t-u}{(a-1)t},-1\Big\},
$$
then 
$$
\begin{cases}
\tilde{u}\in \Phi_0^k(\Omega_t(u));\\
\tilde{u}1_{M_{at}}\leq -1,
\end{cases}
$$
and hence
\begin{eqnarray*}
&&\|\tilde{u}\|^{k+1}_{\Phi_0^k(\Omega_t(u))}\\
&&=\int_{\Omega_t(u)} (-\tilde{u})F_k[\tilde{u}]\\
&&={k}^{-1}\int_{\Omega_t(u)} \tilde{u}_i\tilde{u}_j F_k^{ij}[D^2\tilde{u}]\\
&&={k}^{-1}\int_{\Omega_t(u)\backslash M_{at}(u)}\left(\frac{u}{(a-1)t}\right)_i \left(\frac{u}{(a-1)t}\right)_j F_k^{ij}\left[D^2 \frac{u}{(a-1)t}\right]\\
&&\le\int_{\Omega_t(u)\backslash M_{at}(u)}\left(-\frac{u}{(a-1)t}\right)F_k\left[\frac{u}{(a-1)t}\right]\\
&&= {(a-1)^{-k-1}t^{-k-1}}\int_{\Omega_t\backslash M_{at}}(-u)F_k[u],
\end{eqnarray*}
where 
$$
\begin{cases}
F_k^{ij}[A]=\frac{\partial}{\partial a_{ij}}F_k[A];\\
D^2 f=A=\{a_{ij}\}.
\end{cases}
$$
Using the definition of $cap_{k,3}(\cdot,\Omega)$, we obtain
\begin{eqnarray*}
&&
\int_0^{\infty}t^{k+1}cap_{k,3}(M_{at}(u),\Omega_t(u))\,\frac{dt}{t}\\
&&\le \int_0^{\infty}t^{k+1}\|\tilde{u}\|^{k+1}_{\Phi_0^k(\Omega_t(u))}\,\frac{dt}{t}\\
&&\le \int_0^{\infty}{(a-1)^{-(k+1)}}\Big(\int_{\Omega_t(u)\backslash M_{at}(u)}(-u)F_k[u]\Big)\,\frac{dt}{t}\\
&&\le{(\ln a)}{(a-1)^{-(k+1)}}\|u\|^{k+1}_{\Phi_0^k(\Omega)}.
\end{eqnarray*}
This last inequality, along with $\lambda=at$ and Theorem \ref{t21}, derives
\begin{align*}
&\int_0^{\infty} \lambda^k cap_k\big(M_\lambda(u),\Omega\big) \,d\lambda\\
&\le \int_0^{\infty} (at)^k cap_{k,3} (M_{at}(u),\Omega_t(u)) \,d(at)\\
&\le{\Big(\frac{a}{a-1}\Big)^{k+1}(\ln a)}\|u\|^{k+1}_{\Phi_0^k(\Omega)}.
\end{align*}
\end{proof}

\begin{remark}
\label{r41}
Interestingly, Theorem \ref{t41} (i) with $t>1$ may be regarded as the bottom of a consequence of Theorem \ref{t41} (ii). In fact, since $t\mapsto cap_k\big(M_t(u),\Omega\big)$ is a decreasing function, it follows that 
$$
\Big(\frac{a^{k+1}}{k+1}\Big)cap_k\big(M_a(u),\Omega\big)\le
\int_0^{a} t^k cap_k\big(M_t(u),\Omega\big) \,dt\leq \Big(\frac{a}{a-1}\Big)^{k+1}(\ln a) \|u\|_{\Phi_0^k(\Omega)}^{k+1}\ \forall\ a>1,
$$
and consequently,
$$
cap_k\big(M_a(u),\Omega\big)\le\frac{(k+1)\ln a}{(a-1)^{k+1}}\|u\|_{\Phi_0^k(\Omega)}^{k+1}\ge a^{-(k+1)}\|u\|_{\Phi_0^k(\Omega)}^{k+1}\ \forall\ a>1.
$$
\end{remark}

\section{Analytic vs geometric trace inequalities}\label{s5}

\subsection{Proof of Theorem \ref{t13} (i)}\label{s51} In what follows, we always let
$$
\begin{cases}
1\leq k\leq \frac{n}{2};\\
u\in \Phi_0^k(\Omega)\cap C^2(\bar{\Omega});\\
M_{t}(u)=\{x\in\Omega:\ |u(x)|\geq t\}\quad\forall\quad t>0.
\end{cases}
$$
{\it Step $(i)_1$.} For $k+1\leq q<\infty$ let
$$
C_1\equiv\sup_{t>0}\frac{t^{\frac{k+1}{q}}}{\tau(\mu,\Omega,t)}<\infty.
$$
Then
$$
\mu(K)^{\frac{1}{q}}\leq C_1 ^\frac{1}{k+1} \big(cap_k(K,\Omega)\big)^{\frac{1}{k+1}}\quad\forall\quad \hbox{compact}\ K\subset \Omega.
$$
An application of Theorem \ref{t41} (ii) with $a=n$ yields that for any $u\in \Phi_0^k(\Omega)\cap C^2(\bar{\Omega})$ one has
\begin{eqnarray*}
&&\int_{\Omega}|u|^q\, d\mu\\
&&=\int_0^{\infty}\mu(M_{\lambda}(u))\,d\lambda^q\\
&&\le C_1^\frac{q}{k+1}\int_0^{\infty}\Big(cap_k(M_{\lambda}(u),\Omega)\Big)^{\frac{q}{k+1}}\,d\lambda^q\\
&&\le q(k+1)^{-1}C_1^\frac{q}{k+1}\|u\|_{\Phi_0^k(\Omega)}^{q-k-1}\int_0^{\infty} cap_k(M_{\lambda}(u),\Omega)\,d\lambda^{k+1}\\
&&\le q(k+1)^{-1}C_1^\frac{q}{k+1}c(n,k)\|u\|_{\Phi_0^k(\Omega)}^q.
\end{eqnarray*}
This gives
$$
C_2\equiv\sup\left\{\frac{\|u\|_{L^q(\Omega,\mu)}}{\|u\|_{\Phi_0^k(\Omega)}}:\ u\in \Phi_0^k(\Omega)\cap C^2(\bar{\Omega})\ \hbox{with}\ 0<\|u\|_{\Phi_0^k(\Omega)}<\infty\right\}<\infty.
$$ 

Conversely, assume $C_2<\infty$. An application of the H\"older inequality with $q'=\frac{q}{q-1}$ implies
\begin{eqnarray*}
&&t\mu(M_{t}(u))\\
&&\le\int_{\Omega}|u|1_{M_t(u)}\,d\mu\\
&&\le\|u\|_{L_q(\Omega,\mu)}\big(\mu(M_{t}(u))\big)^{\frac{1}{q'}}\\
&&\le C_2\|u\|_{\Phi_0^k(\Omega)}\big(\mu(M_{t}(u))\big)^{\frac{1}{q'}},
\end{eqnarray*}
and thus
$$
\sup_{t>0}t \big(\mu(M_{t}(u))\big)^\frac1q\leq C_2\|u\|_{\Phi_0^k(\Omega)}.
$$
Now, taking 
$$
\begin{cases}
t=1;\\
u\in \Phi_0^k(\Omega)\cap C^2(\bar{\Omega});\\
|u|\geq 1_K\ \hbox{for\ any\ compact}\ K\subset\Omega,
\end{cases}
$$ 
we obtain
$$
\big(\mu(K)\big)^{\frac{1}{q}}\leq C_2\|u\|_{\Phi_0^k(\Omega)} \leq C_2 \big(cap_k(K,\Omega)\big)^{\frac{1}{k+1}},
$$
whence reaching $C_1\le C_2^{k+1}$.

{\it Step $(i)_2$.} For $1<q<k+1$ let
$$
\begin{cases}
I_{k,q}(\mu)\equiv\int_0^{\infty}\Big(t^{\frac{k+1}{q}}\big(\tau(\mu,\Omega,t)\big)^{-1}\Big)^{\frac{q}{k+1-q}}t^{-1}\, dt;\\
S_{k,q}(\mu,u)\equiv\sum_{j=-\infty}^{\infty}\frac{\big(\mu(M_{2^j}(u))-\mu(M_{2^{j+1}}(u))\big)^{\frac{k+1}{k+1-q}}}{\big(cap_k(M_{2^j}(u))\big)^{\frac{q}{k+1-q}}}.
\end{cases}
$$

If $I_{k,q}(\mu)<\infty$, then the elementary inequality 
$$
a^c+b^c\le(a+b)^c\ \ \forall\ \ a,b\ge 0\ \&\ c\ge 1
$$
implies
\begin{eqnarray*}
&&S_{k,q}(\mu,u)\\
&&=\sum_{j=-\infty}^{\infty}{\big(\mu(M_{2^j}(u))-\mu(M_{2^{j+1}}(u))\big)^{\frac{k+1}{k+1-q}}}{\big(cap_k(M_{2^j}(u),\Omega)\big)^{-\frac{q}{k+1-q}}}\\
&&\le \sum_{j=-\infty}^{\infty}{\big(\mu(M_{2^j}(u))-\mu(M_{2^{j+1}}(u))\big)^{\frac{k+1}{k+1-q}}}{\big(\tau(\mu,\Omega,\mu(M_{2^j}(u)))\big)^{-\frac{q}{k+1-q}}}\\
&&\le  \sum_{j=-\infty}^{\infty}{\mu(M_{2^j}(u))^{\frac{k+1}{k+1-q}}-\mu(M_{2^{j+1}}(u))^{\frac{k+1}{k+1-q}}}{\big(\tau(\mu,\Omega,\mu(M_{2^j}(u)))\big)^{-\frac{q}{k+1-q}}}\\
&&\le c(n,k,q) \int_0^{\infty}{(\tau(\mu,\Omega,s))^{-\frac{q}{k+1-q}}}\,ds^{\frac{k+1}{k+1-q}}\\
&&\le c(n,k,q) I_{k,q}(\mu).
\end{eqnarray*}
Therefore, by the H\"older inequality and Theorem \ref{t41} (ii) with $a=n$ we have
\begin{eqnarray*}
&&\|u\|^q_{L^q(\Omega,\mu)}\\
&&= \int_{0}^\infty\mu\big(M_t(u)\big)\,dt^q\\
&&=-\int_0^{\infty}t^q d\mu(M_{t}(u))\\
&&\le \sum_{-\infty}^{\infty} \big(\mu(M_{2^j}(u))-\mu(M_{2^{j+1}}(u))\big)2^{(j+1)q}\\
&&\le (S_{k,q}(\mu,u))^{\frac{k+1-q}{k+1}} \left( \sum_{-\infty}^{\infty} 2^{j(k+1)} cap_k(M_{2^{j(k+1)}}(u),\Omega)\right)^{\frac{q}{k+1}}\\
&&\le (S_{k,q}(\mu,u))^{\frac{k+1-q}{k+1}} \left( \int_0^{\infty} cap_k(M_{\lambda}(u),\Omega)\,d\lambda^{k+1}\right)^{\frac{q}{k+1}}\\
&&\le c(n,k,q) (S_{k,q}(\mu,u))^{\frac{k+1-q}{k+1}}\|u\|^q_{\Phi_0^k(\Omega)}\\
&&\le c(n,k,q) (I_{k,q}(\mu))^{\frac{k+1-q}{k+1}}\|u\|^q_{\Phi_0^k(\Omega)},
\end{eqnarray*}
whence getting 
$$
C_2^q\le c(n,k,q)\big(I_{k,q}(\mu)\big)^{\frac{k+1-q}{k+1}}.
$$

Conversely, suppose $C_2<\infty$. Then
$$
\sup_{t>0}t\big(\mu(M_{t})\big)^{\frac{1}{q}}\le\|u\|_{L^q(\Omega,\mu)}\leq C_2 \|u\|_{\Phi_0^k(\Omega)}
$$
holds for any $u\in \Phi_0^k(\Omega)\cap C^2(\bar{\Omega})$. According to the definition of $\tau(\mu,\Omega,t)$, for each integer $j$ there exist a compact set $K_j\subset \Omega$ and a function $u_j\in \Phi_0^k(\Omega)\cap C^2(\bar{\Omega})$ such that
$$
\begin{cases}
cap_k(K_j,\Omega)\leq 2\tau(\mu,\Omega, 2^j); \\
\mu(K_j)>2^j;\\
u_j\leq -1_{K_j};\\
2^{-1}\|u_j\|_{\Phi_0^k(\Omega)}^{k+1}\leq cap_k(K_j,\Omega).
\end{cases}
$$
Now, for integers $i$, $m$ with $i<m$ let
$$
\begin{cases}
u_{i,m}=\sup_{i\leq j\leq m}\gamma_j f_j;\\
\gamma_j=\Big(\frac{2^j}{\tau(\mu,\Omega,2^j)}\Big)^{\frac{1}{k+1-q}}\ \forall\ j\in [i,m].
\end{cases}
$$
Then $u_{i,m}$ is a function in $\Phi_0^k(\Omega)\cap C^2(\bar{\Omega})$ -- this follows from an induction and the easily-checked fact below
$$
\max\{u_1,u_2\}=\frac{u_1+u_2+|u_1-u_2|}{2}\in \Phi_0^k(\Omega)\cap C^2(\bar{\Omega}).
$$ 
Consequently,
$$
\|u_{i,m}\|_{\Phi_0^k(\Omega)}^{k+1}\leq c(n,k) \sum_{j=i}^m \gamma_j^{k+1}\|u_j\|_{\Phi_0^k(\Omega)}^{k+1}\leq c(n,k) \sum_{j=i}^m \gamma_j^{k+1}\tau(\mu,\Omega, 2^j).
$$
Observe that for $i\leq j\leq m$ one has
$$
u_{i,m}(x)\leq \gamma_j\ \forall\  x\in K_j.
$$
Therefore,
$$
2^j<\mu(K_j)\leq \mu\left(M_{\gamma_j}(u_{i,m})\right).
$$
This in turn implies
\begin{eqnarray*}
&&\|u_{i,m}\|_{\Phi_0^k(\Omega)}^q\\
&&\ge C_2^{-q} c(n,k,q)\int_{\Omega}|u_{j,m}|^q\,d\mu\\
&&\ge C_2^{-q} \int_0^{\infty}\Big(\inf\{t:\ \mu(M_{t}(u_{i,m}))\leq s\}\Big)^q\, ds\\
&&\ge C_2^{-q} \sum_{j=i}^m \Big(\inf\{t:\ \mu(M_{t}(u_{i,m}))\leq 2^j\}\Big)^q 2^j\\
&&\ge C_2^{-q} \sum_{j=i}^m \gamma_j^q 2^j\\
&&\ge C_2^{-q}c(n,k,q)\left( \frac{\sum_{j=i}^m\gamma_j^q 2^j}{\Big(\sum_{j=i}^m   \big(\gamma_j\big)^{k+1}\tau(\mu,\Omega, 2^j)\Big)^{\frac{q}{k+1}}}\right)\|u_{i,m}\|_{\Phi_0^k(\Omega)}^q\\
&&\ge C_2^{-q}c(n,k,q) \left(\frac{\sum_{j=i}^m {2^{\frac{j(k+1)}{k+1-q}}}{\big( \tau(\mu,\Omega, 2^j)\big)^{-\frac{q}{k+1-q}}}}{\Big(\sum_{j=i}^m 2^{\frac{j(k+1)}{k+1-q}} \big(\tau(\mu,\Omega, 2^j)\big)^{\frac{-q}{k+1-q}}\Big)^{\frac{q}{k+1}}}\right)\|u_{i,m}\|_{\Phi_0^k(\Omega)}^q\\
&&\ge C_2^{-q} c(n,k,q) \left(\sum_{j=i}^m{2^{\frac{j(k+1)}{k+1-q}}}{\big(\tau(\mu,\Omega, 2^j)\big)^{-\frac{q}{k+1-q}}}\right)^{\frac{k+1-q}{k+1}}\|u_{i,m}\|_{\Phi_0^k(\Omega)}^q.
\end{eqnarray*}
Consequently,
$$
I_{k,q}(\mu)\leq \lim_{i\to-\infty,\
 m\to\infty } \sum_{j=i}^m{2^{\frac{(j+1)(k+1)}{k+1-q}}}{(\tau(\mu,\Omega, 2^j))^{-\frac{q}{k+1-q}}}<\infty.
$$

\subsection{Proof of Theorem \ref{t13} (ii)}\label{s53} In the sequel, set
$$
\begin{cases}
k=\frac{n}{2};\\
u\in \Phi_0^k(\Omega)\cap C^2(\bar{\Omega});\\
M_{t}(u)=\{x\in \Omega:\ |u(x)|\ge t\}\ \ \forall\ \ t>0.
\end{cases}
$$
For convenience, rewrite the previous quantity $C_1$ as
$$
C_1(n,k,q,\mu,\Omega)\equiv\sup_{t>0}\frac{t^{\frac{k+1}{q}}}{\tau(\mu,\Omega,t)}.
$$
If
$$
C_3(n,k,\alpha,\beta,\mu,\Omega)\equiv\sup_{t>0}t \exp\left(\frac{\alpha}{\big(\tau(\mu,\Omega,t)\big)^{\frac{\beta}{k+1}}}\right)<\infty,
$$
then for $\tilde{q}\geq k+1$ one has
\begin{eqnarray*}
&&C_1(n,k,\tilde{q},\mu,\Omega)\\
&& =\sup_{t>0}\frac{t^{\frac{k+1}{\tilde{q}}}}{\tau(\mu,\Omega,t)}\\
&& =\sup_{t>0} \left(\Big(\frac{\tilde{q}t^{\frac{\beta}{\tilde{q}}} }{\alpha\beta}\Big) \Big(\frac{\alpha{\beta}{\tilde{q}}^{-1}}{\big(\tau(\mu,\Omega,t)\big)^{\frac{\beta}{k+1}}}\Big)\right)^{\frac{k+1}{\beta}}\\
&& \le \left(\frac{\tilde{q}}{\alpha\beta}\right)^{\frac{k+1}{\beta}} \sup_{t>0} \left(t^{\frac{\beta}{\tilde{q}}}\exp\Big(\frac{\alpha{\beta}{\tilde{q}}^{-1}}{\big(\tau(\mu,\Omega,t)\big)^{\frac{\beta}{k+1}}}\Big)\right)^{\frac{k+1}{\beta}}\\
&& =\left(\frac{\tilde{q}}{\alpha\beta}\right)^{\frac{k+1}{\beta}} \sup_{t>0}\left(t\exp\Big(\frac{\alpha}{\big(\tau(\mu,\Omega,t)\big)^{\frac{\beta}{k+1}}}\Big)\right)^{\frac{k+1}{\tilde{q}}}\\
&& \le\left(\frac{\tilde{q}}{\alpha\beta}\right)^{\frac{k+1}{\beta}}\big(C_3(n,k,\mu,\Omega)\big)^{\frac{k+1}{\tilde{q}}}.
\end{eqnarray*}
Also, applying the H\"older inequality for $\tilde{q}\geq k+1$ we get
\begin{eqnarray*}
&&\int_{\Omega}\exp\Big(\alpha\big(\frac{|u|}{\|u\|_{\Phi_0^k(\Omega)}}\big)^{\beta}\Big)\,d\mu\\
&&=\sum_{i=1}^{\infty}\int_{\Omega}\frac{\alpha^i}{i!}\big(\frac{|u|}{\|u\|_{\Phi_0^k(\Omega)}}\big)^{\beta i}\,d\mu\\
&&=\sum_{i<\frac{k+1}{\beta}}\int_{\Omega}\frac{\alpha^i}{i!}\big(\frac{|u|}{\|u\|_{\Phi_0^k(\Omega)}}\big)^{\beta i}\,d \mu+ \sum_{i\geq\frac{k+1}{\beta}}\int_{\Omega}\frac{\alpha^i}{i!}\big(\frac{|u|}{\|u\|_{\Phi_0^k(\Omega)}}\big)^{\beta i}\,d \mu\\
&&\le S_1+S_2,
\end{eqnarray*}
where
$$
\begin{cases}
S_1\equiv\sum_{i<\frac{k+1}{\beta}}\frac{\alpha^i}{i!}\big(\mu(\Omega)\big)^{1-\frac{\beta i}{\tilde{q}}}\Big(\int_{\Omega}\big(\frac{|u|}{\|u\|_{\Phi_0^k(\Omega)}}\big)^{\tilde{q}}\,d \mu\Big)^{\frac{\beta i}{\tilde{q}}};\\
S_2\equiv\sum_{i\geq\frac{k+1}{\beta}}\frac{\alpha^i}{i!}\int_{\Omega}\big(\frac{|u|}{\|u\|_{\Phi_0^k(\Omega)}}\big)^{\beta_0 i}\,d \mu.
\end{cases}
$$

Next, we control $S_1$ and $S_2$ from above. As estimated in the previous subsection, for any $u\in \Phi_0^k(\Omega)\cap C^2(\bar{\Omega})$ and integer $m\geq k+1$ we have
$$
\int_{\Omega}|u|^m \,d\mu\leq \big(C_1(n,k,m,\mu,\Omega)\big)^{\frac{m}{k+1}}c(n,k)\|u\|_{\Phi_0^k(\Omega)}^m.
$$
This, along with the previously-verified inequality
$$
C_1(n,k,\tilde{q},\mu,\Omega)\le \left(\frac{\tilde{q}}{\alpha\beta}\right)^{\frac{k+1}{\beta}} \big(C_3(n,k,\mu,\Omega)\big)^\frac{k+1}{\tilde{q}}\ \forall\ \tilde{q}\ge k+1,
$$ 
gives
$$
S_1\leq \sum_{i<\frac{k+1}{\beta}}\frac{\alpha^i}{i!}\big(\mu(\Omega)\big)^{1-\frac{\beta i}{\tilde{q}}}\Big(\big(C_1(n,k,\tilde{q},\mu,\Omega)\big)^{\frac{\tilde{q}}{k+1}}c(n,k)\Big)^{\frac{\beta i}{\tilde{q}}}<\infty.
$$
Meanwhile, Theorem \ref{t41} (ii) with $a=n$ is used to derive
\begin{eqnarray*}
&& S_2\\
&&=\sum_{i\geq\frac{k+1}{\beta}}\frac{\alpha^i}{i!}\|u\|_{\Phi_0^k(\Omega)}^{-\beta i}\int_{\Omega}|u|^{\beta i}\,d \mu\\
&& = \sum_{i\geq\frac{k+1}{\beta}}\frac{\alpha^i}{i!}{\|u\|_{\Phi_0^k(\Omega)}^{-\beta i}}\int_0^\infty \mu(M_t(u))\,dt^{\beta i}\\
&& = \sum_{i\geq\frac{k+1}{\beta}}\frac{\alpha^i}{i!}\int_0^\infty \frac{\big(cap_k(M_t(u),\Omega)\big)^\frac{\beta i}{k+1}}{\|u\|_{\Phi_0^k(\Omega)}^{\beta i}} \left(\frac{\mu(M_t(u))}{\big(cap_k(M_t(u),\Omega)\big)^\frac{\beta i}{k+1}}\right)\,dt^{\beta i}\\
&& \le \sum_{i\geq\frac{k+1}{\beta}}\frac{\alpha^i}{i!}\int_0^\infty \frac{cap_k(M_t(u),\Omega)}{t^{\beta i-k-1}}\left(\frac{\|u\|_{\Phi_0^k(\Omega)}^{\beta i-k-1}}{\|u\|_{\Phi_0^k(\Omega)}^{\beta i}}\right)\left( \frac{\mu(M_t(u))}{\big(cap_k(M_t(u),\Omega)\big)^\frac{\beta i}{k+1}}\right)\,dt^{\beta i}\\
&&\le\frac{\alpha\beta}{k+1}\int_0^\infty \sum_{i=0}^\infty \frac{\alpha^i}{i!}\left( \frac{\mu(M_t(u))}{\big(cap_k(M_t(u),\Omega)\big)^\frac{\beta i}{k+1}}\right)cap_k(M_t(u),\Omega) \|u\|_{\Phi_0^k(\Omega)}^{-(k+1)} \,dt^{k+1}\\
&&\le\frac{\alpha\beta}{k+1}\int_0^\infty \left(\mu(M_t(u))\exp\Big(\frac{\alpha}{\big(cap_k(M_t(u),\Omega)\big)^\frac{\beta}{k+1}}\Big)\right)\left(\frac{cap_k(M_t(u),\Omega)}{ \|u\|_{\Phi_0^k(\Omega)}^{k+1}}\right)\,dt^{k+1}\\
&&\le\alpha\beta(k+1)^{-1} C_3(n,k,\alpha,\beta,\mu,\Omega) \|u\|_{\Phi_0^k(\Omega)}^{-(k+1)} \int_0^\infty \big(cap_k(M_t(u),\Omega)\big)\,dt^{k+1}\\
&& \le \alpha\beta c(n,k)C_3(n,k,\alpha,\beta,\mu,\Omega).
\end{eqnarray*}
Now, putting the estimates for $S_1$ and $S_2$ together we obtain 
\begin{equation*}
\label{e1e3}
C_4\equiv\sup\left\{\|u\|_{L^1_\varphi(\Omega,\mu)}: \  u\in \Phi_0^k(\Omega)\cap C^2(\bar{\Omega})\ \hbox{with}\ \|u\|_{\Phi^k_0(\Omega)}>0\right\}<\infty.
\end{equation*}

Conversely, if $C_4<\infty$, then for any $u\in \Phi_0^k(\Omega)\cap C^2(\bar{\Omega})$ {with} $\|u\|_{\Phi^k_0(\Omega)}>0$ one always has
$$
\int_{\Omega}\exp\left(\alpha\big(\frac{|u|}{\|u\|_{\Phi_0^k(\Omega)}}\big)^{\beta}\right)\,d \mu\le C_4.
$$
Note that for any compact set $K\subset \Omega$ there exists a function $R_k(K,\Omega)$ such that
$$
\begin{cases}
R_k(K,\Omega)\in \Phi_0^k(\Omega)\cap C^2(\bar{\Omega});\\
|R_k(K,\Omega)|\geq 1_K.
\end{cases}
$$ 
So, we get
\begin{eqnarray*}
&&\mu(K) \exp\left(\frac{\alpha}{\big(cap_k(K,\Omega)\big)^\frac{\beta}{k+1}}\right)\\
&&\le \int_K \exp\left(\frac{\alpha}{\big(cap_k(K,\Omega)\big)^\frac{\beta}{k+1}}\right)\, d\mu\\
&&\le \int_\Omega \exp\left(\alpha \Big(\frac{|R_k(K,\Omega)|}{\|R_k(K,\Omega)\|_{\Phi_0^k(\Omega)}}\Big)^{\beta}\right)\, d\mu\\
&&\le C_4,
\end{eqnarray*}
whence achieving $C_3(n,k,\alpha,\beta,\mu,\Omega)\le C_4$.

\begin{proof}[\bf Acknowledgements] We would like to thank the referee for his/her remarks that indicate a possibility of utilizing both the equivalence between the $k$-Hessian capacity and the Bessel capacity found in \cite[Theorem 20]{PV} and the capacitary weak-strong inequalities explored in Adams-Hedberg's \cite[Chapter 7]{AH} to obtain Theorem \ref{t13} (i) under $1\le k<n/2$ (see \cite[p.211, 7.6.6]{AH} or directly \cite[Section 8.4.2]{Maz0}, \cite[Appendix, Theorem 2]{MazN} and \cite{Ve}). 
\end{proof}

\end{document}